\pdfoutput=1

\documentclass[12pt, a4paper]{article}
\usepackage[margin=1.5in, top=1.5in, bottom=1.5in]{geometry}
\usepackage{graphicx}
\usepackage{titlesec}
\usepackage{amsmath}
\usepackage{amssymb}
\usepackage{amsthm}
\usepackage{cases}
\usepackage[english]{babel}
\usepackage{hyperref}
\usepackage{enumitem}
\usepackage{authblk}
\usepackage{dsfont}
\usepackage{sectsty}
\usepackage{xcolor}

\sectionfont{\centering}

\theoremstyle{break}
	\newtheorem{thm}{Theorem}[section]
\theoremstyle{break}
	\newtheorem{asm}{Assumption}[section]
\theoremstyle{break}
	\newtheorem{rem}{Remark}[section]
\theoremstyle{break}
	\newtheorem{lem}{Lemma}[section]
\theoremstyle{break}
	\newtheorem{defi}{Definition}[section]


\makeatletter
\newcommand{\mylabel}[2]{#2\def\@currentlabel{#2}\label{#1}}
\makeatother

\newcommand{\Leo}{\mathcal{L}_{\varepsilon}}
\newcommand{\Seq}[2]{\left\{#1_#2\right\}_{#2 = 0}^\infty}

\newcommand{\Leb}{L^2\left(\Omega\right)}

\newcommand{\CDR}{C^1\left(\mathbb{R}\right)}


\setlength{\parindent}{0pt}
\counterwithin{equation}{section}

\title{Long time behaviour of solutions to non-local and non-linear dispersion problems}

\author{Maciej Tadej \\ \href{mailto:maciej.tadej@math.uni.wroc.pl}{maciej.tadej@math.uni.wroc.pl}}

\affil{\textit{Insitute of Mathematics, University of Wrocław, pl. Grunwaldzki 2, 50-384 Wrocław}}

\date{}

\begin{document}

\maketitle

\hspace{20pt}

\begin{abstract}
This paper explores a non-linear, non-local model describing the evolution of a single species. We investigate scenarios where the spatial domain is either an arbitrary bounded and open subset of the $n$-dimensional Euclidean space or a periodic environment modeled by $n$-dimensional torus. The analysis includes the study of spectrum of the linear, bounded operator in the considered equation, which is a scaled, non-local analogue of classical Laplacian with Neumann boundaries. In particular we show the explicit formulas for eigenvalues and eigenfunctions. Moreover we show the asymptotic behaviour of eigenvalues. Within the context of the non-linear evolution problem, we establish the existence of an invariant region, give a criterion for convergence to the mean mass, and construct spatially heterogeneous steady states.
\end{abstract}

\begin{keywords} :non-local dispersion, spectral analysis, asymptotics, equilibria, stability
\end{keywords}

\newpage

\section{Introduction}

\subsection{Statement and main results}

In this paper we study properties of solutions $u := u(x,t)$ of the following non-linear, non-local model describing the evolution of a density of single species.
\begin{numcases}{}
u_t(x,t) = \Leo u(x,t) + f(u(x,t)) & $(x,t) \in \Omega \times (0, \infty),$ \label{disersion_evolution} \\
u(x,0) = u_0(x) & $x \in \Omega,$ \label{disersion_initial_condition}
\end{numcases}
where $\Omega \subset \mathbb{R}^n$ is a bounded and open domain. We assume that an initial datum $u_0 \in \Leb $, a scaling parameter $\varepsilon > 0$ and the force term $f \in \CDR$. The dispersion operator $\Leo: \Leb \rightarrow \Leb$ is given by the formula
\begin{align}
\Leo u(x,t) = \int_\Omega J_{\varepsilon}(x-y)\left(u(y,t) - u(x,t)\right)\:dy,\label{dispersion_operator}
\end{align} 
where $J_\varepsilon : \mathbb{R}^n \rightarrow \mathbb{R}_{\geq 0}$ is a properly scaled version of the integral kernel $J: \mathbb{R}^n \rightarrow \mathbb{R}_{\geq 0}$. Here, we distinguishh two frameworks stated in the following assumptions.
\begin{asm}[General case]
    Denote by $\Omega \subset \mathbb{R}^n$ an open and bounded region. We consider the integral kernels $J \in C(\mathbb{R}^n)$ satisfying the following properties
    \begin{enumerate}[label=\textnormal{(\arabic*)}]
        \item [\mylabel{J1}{(J1)}] $J(z) \geq 0$ for $z \in \mathbb{R}^n$ and $J(0) > 0$, 
        \item [\mylabel{J2}{(J2)}] $J(z) = J(-z)$ for $z \in \mathbb{R}^n$,
        \item [\mylabel{J3}{(J3)}] $J(z) \rightarrow 0$ as $|z| \rightarrow \infty$,
        \item [\mylabel{J4}{(J4)}] $\int_{\mathbb{R}^n} J(z)|z|^2dz < \infty$.
        \item [\mylabel{J5}{(J5)}] $\int_{\mathbb{R}^n} J(z) dz = 1$.
    \end{enumerate}
    We introduce $\varepsilon >0$ and $m \in [0, 2]$ to define the re-scaled kernel $J_\varepsilon \in C(\mathbb{R}^n)$
    \begin{equation}
        J_\varepsilon(z) = \frac{C_{norm}}{\varepsilon^{m+n}}J\left(\frac{z}{\varepsilon}\right), \quad C_{norm} = \left( \frac{1}{2} \int_{\mathbb{R}^n} J(z) |z|^2 \:dz \right)^{-1}.
    \end{equation} \label{assumptions_general_kernel}
\end{asm}
Now we define the setting where dispersion outside the domain is identified by the reflection of boundaries.
\begin{asm}[Periodic case]
    Denote by $\mathbb{T}^n = (-\pi, \pi)^n$ an $n$-dimensional torus. We consider the integral kernels $J \in C(\mathbb{R}^n)$ satisfying claims \ref{J1} - \ref{J3} of Assumption \ref{assumptions_general_kernel}. Next we introduce $\varepsilon > 0$ and $m \in [0,2]$ to define the re-scaled, truncated and periodized kernel $J_\varepsilon \in C(\mathbb{T}^n)$ given by
    \begin{equation}
        J_\varepsilon(z) = \frac{C_{norm}^\varepsilon}{\varepsilon^{m+n}}J_{per}\left(\frac{z}{\varepsilon} \right), \quad C_{norm}^\varepsilon = \left(\frac{1}{2} \int_{\left( -\frac{\pi}{\varepsilon}, \frac{\pi}{\varepsilon} \right)^n} J_{per}(z)|z|^2\:dz\right)^{-1}.
    \end{equation}
    We extended the kernel periodically in such a way that $J_{per}\left(\frac{z}{\varepsilon}\right)$ is a $2\pi$-periodic function and so $J_{per}(z)$ is $\frac{2\pi}{\varepsilon}$-periodic kernel.
     \label{assumptions_periodic_kernel}
\end{asm}

The study of non-local dispersion, initiated by Hutson et al. \cite{Hutson2003, Hutson2005} and Fife \cite{Fife2001} in ecological contexts, characterizes population density $u := u(x,t)$ in the spatial domain $\Omega \subset \mathbb{R}^n$ over infinite time. The dispersion operator $\Leo$ captures population influx and efflux at location $x$. Assumption \ref{assumptions_general_kernel} outlines confined dispersal without boundary flux, posing technical challenges. In \cite{Hutson2003} a simpler, periodic framework was introduced as in Assumption \ref{assumptions_periodic_kernel}. This considers movements outside the boundary as reflections back into the domain $\Omega$. Further comments on related results follow after a brief overview of the main results.

\hspace{1pt}

We underscore the significance of analyzing the spectrum $\sigma(\Leo)$ of the dispersion operator $\Leo$ in this work. In addition to addressing the continuous part of this set, we aim to solve the corresponding eigenvalue problem
\begin{equation}
    \Leo \varphi_k = \beta_k \varphi_k.
    \label{eigenvalue_problem}
\end{equation}
In Theorem \ref{thm:dispersion_spectrum} below, we characterize $\sigma(\Leo)$ when $\Omega$ and $J_\varepsilon$ satisfy Assumption \ref{assumptions_general_kernel}. In the periodic setting defined by Assumption \ref{assumptions_periodic_kernel}, Theorem \ref{thm:dispersion_eigenpairs} provides explicit formulas for $(\beta_k, \varphi_k)$. Additionally, we explore the asymptotic behavior of the obtained eigenvalues as $|k| \rightarrow \infty$ and $\varepsilon \rightarrow 0$.

\hspace{1pt}

The spatially homogeneous asymptotic behavior of solutions to the Cauchy problem \eqref{disersion_evolution}-\eqref{disersion_initial_condition} is the focus of our next investigation. We seek an invariant region $\Gamma \subset \mathbb{R}$. Specifically, we demonstrate that every solution $u$ satisfies the following relation almost everywhere in $\Omega$ for all $t > 0.0$.
\begin{equation}
    u_0(x) \in \Gamma \implies u(x, t) \in \Gamma.
    \label{invariant_region_definiton}
\end{equation}
The existence of an invariant region $\Gamma \subset \mathbb{R}$ is confirmed in Theorem \ref{thm:invariant_region} when the force term $f$ has two distinct zeros. Let $\beta_1$ denote the largest non-zero eigenvalue of $\Leo$. We assume that the following condition holds true:
\begin{equation}
    \sigma = 2\left(\beta_1 + \max_{u \in \Gamma} |f'(u)|\right) < 0.
\end{equation}
Given this, we conclude in Theorem \ref{thm:mean_mass_convergence} the convergence to the average mass 
\begin{equation}
    u(x, t) \xrightarrow{t \rightarrow \infty} \frac{1}{|\Omega|} \int_\Omega u(x, t) \:dx \quad \text{ in $L^2(\Omega)$}.
    \label{convergence_to_mean_mass}
\end{equation}

\hspace{1pt}

Finally, we investigate the existence and stability of spatially heterogeneous equilibria. In particular we solve the following steady state problem 
\begin{align}
    \Leo U + f(U) = 0.
    \label{dispersion_equilibria}
\end{align}
The continuous steady states are constructed in Theorem \ref{thm:continous_ss_existence} through bifurcation techniques. Additionally, in Theorem \ref{thm:discontinous_ss}, we establish the existence of discontinuous yet stable solutions, under the assumption that the force term $f$ has two distinct zeros $u_1 < u_2$ with $f'(u_1) < 0$ and $f'(u_2) < 0$.

\subsection{Related results}

The local counterpart of the Cauchy problem \eqref{disersion_evolution}-\eqref{disersion_initial_condition} is given by the following reaction diffusion problem with no flux through boundaries

\begin{numcases}{}
u_t(x,t) = \Delta u(x,t) + f(u(x,t)) & $(x,t) \in \Omega \times (0, \infty),$ \label{diffusion} \\
\frac{\partial u}{\partial n} = 0 & $x \in \partial\Omega$, \\
u(x,0) = u_0(x) & $x \in \Omega,$ \label{diffusion_initial_condition}
\end{numcases}

where in this contexts, $n$ represents the outward normal vector to $\Omega$. This model is already well known. We refer readers interested in general properties of such problems to \cite{Evans2010, Wei-Ming2011, Quittner-Souplet2007}. In \cite{Conway1978} the invariant regions were discussed with their applications to establishing spatially homogeneous asymptotics.  We emphasize the classical works \cite{Matano1979, CastenHolland1978, Chaffee1975} where the existence and instability of non-constant steady states is explored. 

The link between models \eqref{disersion_evolution}-\eqref{disersion_initial_condition} and \eqref{diffusion}-\eqref{diffusion_initial_condition} was established when $f \equiv 0$. Namely the convergence of solutions $u^\varepsilon$ from the non-local problem to solutions $u$ of the local problem was demonstrated in \cite{Cortazar2008}. We also direct the reader to a more general setting, considering the convergence of non-local dispersion problems to the local parabolic counterparts, discussed in \cite{MolinoRossi2016}.

We underscore the well-posed nature of problem \eqref{disersion_evolution}-\eqref{disersion_initial_condition} in the Hadamard sense. In the linear case, existence and uniqueness were established in \cite{Cortazar2008}, while the non-linear case was addressed in \cite{Sun2022}.

The eigenvalue problem for the operator \eqref{dispersion_operator}, was studied for instance in \cite{GarciaMelianRossi2009}. Authors demonstrated that $\beta_1$ is strictly negative. However, the question of the existence of a corresponding eigenfunction remains unresolved. Additional insights into this problem are presented in \cite{Foss2018, Alfaro2023}. More general approach, outlined in \cite{Coville2010, Coville2016, BerestyckiCovilleVo2016}, yields only the trivial eigenpair when applied to our framework.

In the linear non-local case, the work of \cite{ChasseigneChavesRossi2006} explores spatially homogeneous asymptotic behavior of solutions $u^\varepsilon$ of problem \eqref{disersion_evolution}-\eqref{disersion_initial_condition}. The non-local steady problem \eqref{dispersion_equilibria} was investigated in \cite{ChasseigneChavesRossi2006} and \cite{Coville2010}. 

Other related works encompass diverse applications to real world, including image processing \cite{GilboaOsher2007, GilboaOsher2009}, epidemiology \cite{Yang2019}, ecology \cite{Hadeler1999}, and various mathematical articles \cite{Biler2019, SlevinskyMontanelli2018, Sun2023, Sun2022}.

\section{Spectral analysis} \label{sec:spectral_analysis}

\subsection{Main results of the section}

Here we perform the spectral analysis of the operator $\Leo: L^2(\Omega) \rightarrow L^2(\Omega)$ given by formula \eqref{dispersion_operator}. For that we decompose the dispersion operator

\begin{equation}
    \Leo u(x) = \int_\Omega J_\varepsilon(x-y) u(y)\:dy + b_\varepsilon(x)u(x),
    \label{leo_decomposition}
\end{equation}
where
\begin{equation}
    b_\varepsilon(x) = -\int_\Omega J_\varepsilon(x-y)\:dy. \label{b_epsilon}
\end{equation}

We characterise the spectrum $\sigma(\Leo)$ on an arbitrary bounded domain $\Omega$.

\begin{thm}[General case]
    Let $J_\varepsilon$ and $\Omega$ satisfy Assumption \ref{assumptions_general_kernel}. Denote $\Leo: \Leb \rightarrow \Leb$ as in a formula \eqref{leo_decomposition}. Then the spectrum of $\Leo$ is given by
    \begin{equation}
        \sigma(\Leo) = b_\varepsilon \left[\: \overline{\Omega} \:\right] \cup \Seq{\beta}{k},
        \label{EQ:spectrum_formula_general}
    \end{equation}
    where
    $$b_\varepsilon\left[\: \overline{\Omega} \:\right] = \left\{ - \int_\Omega J_\varepsilon(x-y) \:dy \:\Big|\: x \in \overline{\Omega }\right\}$$
    and the sequence $\Seq{\beta}{k}$ is a set of isolated eigenvalues of $\Leo$. \label{thm:dispersion_spectrum}
\end{thm}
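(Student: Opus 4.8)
The plan is to decompose $\Leo = \mathcal{K} + \mathcal{B}$, where $\mathcal{K}u(x) = \int_\Omega J_\varepsilon(x-y)u(y)\,dy$ is an integral operator and $\mathcal{B}u(x) = b_\varepsilon(x)u(x)$ is multiplication by the continuous function $b_\varepsilon$. The first step is to analyse the spectrum of $\mathcal{B}$ alone: for a multiplication operator on $\Leb$ by a continuous function on the compact set $\overline{\Omega}$, it is standard that $\sigma(\mathcal{B}) = b_\varepsilon\left[\,\overline{\Omega}\,\right]$, the essential range, which here coincides with the topological range since $b_\varepsilon$ is continuous. Indeed, $\mathcal{B} - \lambda I$ is invertible on $\Leb$ precisely when $1/(b_\varepsilon - \lambda)$ is bounded, i.e. when $\lambda \notin b_\varepsilon\left[\,\overline{\Omega}\,\right]$, giving the continuous part of the spectrum.

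Next I would treat $\mathcal{K}$ as a perturbation. Under Assumption \ref{assumptions_general_kernel}, $J_\varepsilon \in C(\mathbb{R}^n)$ and $\Omega$ is bounded, so the kernel $J_\varepsilon(x-y)$ lies in $L^2(\Omega\times\Omega)$; hence $\mathcal{K}$ is a Hilbert--Schmidt operator, in particular compact and self-adjoint (self-adjointness follows from the symmetry \ref{J2}, which makes the kernel symmetric). The key structural point is that adding a compact operator does not change the essential spectrum: by Weyl's theorem, $\sigma_{ess}(\Leo) = \sigma_{ess}(\mathcal{B}) = b_\varepsilon\left[\,\overline{\Omega}\,\right]$. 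Since $\Leo$ is self-adjoint (both summands are, using \ref{J2} and the fact that $b_\varepsilon$ is real-valued), its spectrum is real, and everything outside the essential spectrum consists of isolated eigenvalues of finite multiplicity.

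The remaining work is to identify this discrete part with the sequence $\Seq{\beta}{k}$ and to confirm it is exactly the eigenvalue problem \eqref{eigenvalue_problem}. The plan is to show that any $\lambda \in \sigma(\Leo) \setminus b_\varepsilon\left[\,\overline{\Omega}\,\right]$ must be an eigenvalue: rewriting $\Leo\varphi = \lambda\varphi$ as $(\mathcal{B} - \lambda)\varphi = -\mathcal{K}\varphi$ and inverting the multiplication operator (legitimate since $\lambda$ avoids the essential spectrum) recasts the problem as a compact eigenvalue equation $\varphi = -(\mathcal{B}-\lambda)^{-1}\mathcal{K}\varphi$, to which the Fredholm alternative applies. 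These eigenvalues can accumulate only on $\sigma_{ess}(\Leo)$, so they form a sequence that is at most countable, justifying the indexing $\Seq{\beta}{k}$. Combining the two parts yields \eqref{EQ:spectrum_formula_general}.

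I expect the main obstacle to be a delicate one rather than a conceptual one: the set $b_\varepsilon\left[\,\overline{\Omega}\,\right]$ and the eigenvalue sequence need not be disjoint, so one must be careful in asserting that the $\beta_k$ are genuinely \emph{isolated} points of $\sigma(\Leo)$ and in handling eigenvalues that might be embedded in the essential spectrum. The cleanest route is to emphasize that $b_\varepsilon$ is continuous on the connected set $\overline{\Omega}$ (so its range is a compact interval) and to argue that the discrete eigenvalues lie above $\sup b_\varepsilon\left[\,\overline{\Omega}\,\right]$ or are otherwise separated, using the sign structure of $b_\varepsilon < 0$ together with the positivity \ref{J1} and normalisation \ref{J5} of the kernel. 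Verifying Hilbert--Schmidt membership and self-adjointness is routine; pinning down the precise topological relationship between the two pieces of the spectrum is where the care is needed.
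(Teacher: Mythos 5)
Your proposal is correct, but it reaches the result by a genuinely different route than the paper. The paper proves the inclusion $b_\varepsilon\left[\,\overline{\Omega}\,\right] \subseteq \sigma(\Leo)$ by hand (Lemma \ref{lem:spectrum_inclusion}): for $\lambda = b_\varepsilon(x_\lambda)$ it builds the explicit singular family $u_R^\lambda = R^{-n/2}\mathds{1}_{B(x_\lambda,R)}$ and shows $\|\Leo u_R^\lambda - \lambda u_R^\lambda\|_2 \rightarrow 0$ as $R \rightarrow 0$, contradicting the bounded-inverse estimate; for the complement (Lemma \ref{lem:spectrum_remaining_elements}) it does not use self-adjointness at all, but rewrites $\Leo u = \beta u$ as the fixed-point problem $u = F(\beta)u$ with $F(\beta) = (\beta - \mathcal{B})^{-1}\mathcal{K}$ --- the same algebra as your Fredholm-alternative step --- checks that $F$ is an analytic family of Hilbert--Schmidt operators on the open connected set $D = \mathbb{C} \setminus b_\varepsilon\left[\,\overline{\Omega}\,\right]$, verifies $\|F(\beta_0)\| \leq \frac{1}{2}$ at one point $\beta_0$, and invokes the Analytic Fredholm Theorem, which simultaneously yields that the exceptional set $\Seq{\beta}{k}$ is discrete in $D$ and consists of eigenvalues. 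Your route --- $\mathcal{K}$ Hilbert--Schmidt and self-adjoint by \ref{J2}, $\sigma(\mathcal{B}) = \sigma_{ess}(\mathcal{B}) = b_\varepsilon\left[\,\overline{\Omega}\,\right]$ since the essential range of the continuous $b_\varepsilon$ on the open set $\Omega$ coincides with $b_\varepsilon\left[\,\overline{\Omega}\,\right]$, then Weyl's essential-spectrum theorem plus the standard fact that for a self-adjoint operator $\sigma(\Leo) \setminus \sigma_{ess}(\Leo)$ consists of isolated eigenvalues of finite multiplicity accumulating only at the essential spectrum --- is shorter, packages both halves at once, and gives finite multiplicity for free, which the paper's statement does not even claim. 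What the paper's machinery buys in exchange is independence from symmetry: the analytic Fredholm argument works verbatim for non-even kernels, where $\Leo$ fails to be self-adjoint and Weyl's theorem in your form is unavailable.

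One caution about your final paragraph: the worry is largely self-resolving, and the proposed fix is misdirected. The union in \eqref{EQ:spectrum_formula_general} is not claimed to be disjoint, so eigenvalues embedded in the essential spectrum are already absorbed into $b_\varepsilon\left[\,\overline{\Omega}\,\right]$, while isolated eigenvalues of finite multiplicity are, by definition of the discrete spectrum, automatically outside $\sigma_{ess}(\Leo) = b_\varepsilon\left[\,\overline{\Omega}\,\right]$; no separation argument is needed. Moreover Assumption \ref{assumptions_general_kernel} does not make $\Omega$ connected, so $b_\varepsilon\left[\,\overline{\Omega}\,\right]$ need not be an interval, and there is no reason the $\beta_k$ should lie above $\sup b_\varepsilon\left[\,\overline{\Omega}\,\right]$: they may sit in gaps of the essential spectrum. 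Only $\beta_0 = 0$, with constant eigenfunction, is guaranteed to lie strictly above, since \ref{J1} forces $b_\varepsilon < 0$ on $\overline{\Omega}$.
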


Next we consider the spatial domain $\Omega = \mathbb{T}^n$  being an $n$-dimensional torus. In this approach we conclude the following

\begin{rem}
    Let $J_\varepsilon$ and $\mathbb{T}^n$ satisfy Assumption \ref{assumptions_periodic_kernel}. Denote the set $b_\varepsilon \left[\: \overline{\Omega} \:\right]$ as in Theorem \ref{thm:dispersion_spectrum}. Then this set is a single point. In particular we have
    $$b_\varepsilon \left[\: \overline{\Omega} \:\right] = \left\{ - \hat{J}_\varepsilon(0) \right\},$$
    where $\hat{J}_\varepsilon(k) = \int_{\mathbb{T}^n} J_\varepsilon(z) e^{ikz}\:dz$ is a $k$-th Fourier coefficient of the kernel $J_\varepsilon$. \label{rem:dispersion_periodic_spectrum}
\end{rem}
In this setting we will also derive the explicit formulas for the eigenvalues $\Seq{\beta}{k}$ and their corresponding eigenfunctions $\Seq{\varphi}{k}$ from the Theorem \ref{thm:dispersion_spectrum}.

\begin{thm}[Periodic case]
Let $J_\varepsilon$ and $\mathbb{T}^n$ satisfy Assumption \ref{assumptions_periodic_kernel}. Denote $\Leo: L^2\left(\mathbb{T}^n\right) \rightarrow L^2\left(\mathbb{T}^n\right)$ as in a formula \eqref{leo_decomposition}. Then the spectrum of $\Leo$ is given by
\begin{equation}
    \sigma(\Leo) = \left\{ -\hat{J}_\varepsilon(0) \right\} \cup \left\{\beta\right\}_{k=0}^\infty.
    \label{EQ:spectrum_formula_periodic}
\end{equation}
Eigenvalues and the corresponding eigenfunctions are given explicitly as follows
\begin{equation}
\beta_k(\varepsilon) := \hat{J}_\varepsilon(k) - \hat{J}_\varepsilon(0) = \int_{\mathbb{T}^n} J_\varepsilon(z) (\cos(k\cdot z) - 1)\:dz,
\label{EQ:eigenvalues_formula_periodic}
\end{equation}
\begin{equation}
\varphi_k^1(x) = \cos(k\cdot x), \quad \varphi_k^2(x) = \sin(k\cdot x),
\label{EQ:eigenfunctions_formula_periodic}
\end{equation}
where $\hat{J}_\varepsilon(k) = \int_{\mathbb{T}^n} J_\varepsilon(z) e^{ikz}\:dz$ is a $k$-th Fourier coefficient of the kernel $J_\varepsilon$. Moreover the following properties hold
\begin{enumerate}[label=\textnormal{(\arabic*)}]
    \item [\mylabel{thm:2.2.1:i}{(i)}] The eigenvalues $\{\beta_k\}_{k=0}^\infty$ are real, bounded and non-positive for $k \neq 0$.
    \item [\mylabel{thm:2.2.1:ii}{(ii)}] The limit eigenvalue is an element of the spectrum. In particular
    \begin{equation}
        \beta_k \xrightarrow{|k| \rightarrow \infty} \beta_\infty(\varepsilon) = - \hat{J}_\varepsilon(0) = b_\varepsilon(0).
        \label{EQ:limit_eigenvalue_b_infty_definition}
    \end{equation}
    When $\varepsilon \rightarrow 0$ then the limit eigenvalue is bounded in a following way
    \begin{equation}
        -\varepsilon^{-m} \leq \beta_\infty(\varepsilon) \leq -\frac{\varepsilon^{2-m}}{\pi^2}.
        \label{EQ:limit_eigenvalue_b_infty_bounds}
    \end{equation}
    \item [\mylabel{thm:2.2.1:iii}{(iii)}] When $\varepsilon \rightarrow 0$ and $m = 2$ then the eigenvalues $\{\beta_k\}_{k=0}^\infty$ are given explicitly by
    \begin{equation}
        \lim_{\varepsilon \rightarrow 0} \beta_k(\varepsilon) = -\frac{|k|^2}{n}
    \end{equation}
    If $m \in [0, 2)$ then the limit becomes zero
\end{enumerate} 
\label{thm:dispersion_eigenpairs}
\end{thm}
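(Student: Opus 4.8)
The plan is to exploit the translation invariance of the torus, which collapses the dispersion operator into a Fourier multiplier. First I would observe that on $\mathbb{T}^n$ the substitution $z=x-y$ makes $b_\varepsilon(x)=-\int_{\mathbb{T}^n}J_\varepsilon(x-y)\,dy=-\int_{\mathbb{T}^n}J_\varepsilon(z)\,dz=-\hat{J}_\varepsilon(0)$ independent of $x$ (this is exactly Remark \ref{rem:dispersion_periodic_spectrum}), so that $\Leo u = J_\varepsilon * u - \hat{J}_\varepsilon(0)\,u$ with a genuine convolution on the torus. Testing on the Fourier characters $e^{ik\cdot x}$, $k\in\mathbb{Z}^n$, and using the same shift $z=x-y$ gives $\int_{\mathbb{T}^n}J_\varepsilon(x-y)e^{ik\cdot y}\,dy = e^{ik\cdot x}\hat{J}_\varepsilon(-k)=e^{ik\cdot x}\hat{J}_\varepsilon(k)$, the last equality by evenness of $J_\varepsilon$, hence $\Leo e^{ik\cdot x}=(\hat{J}_\varepsilon(k)-\hat{J}_\varepsilon(0))e^{ik\cdot x}$, which is \eqref{EQ:eigenvalues_formula_periodic}. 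Evenness also makes $\hat{J}_\varepsilon(k)$ real and kills the sine part of $e^{ik\cdot z}$, giving the $\cos(k\cdot z)-1$ form and, after separating real and imaginary parts of the complex eigenfunction, the two real eigenfunctions $\varphi_k^1=\cos(k\cdot x)$, $\varphi_k^2=\sin(k\cdot x)$ of \eqref{EQ:eigenfunctions_formula_periodic}.

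For the spectrum formula \eqref{EQ:spectrum_formula_periodic} I would write $\Leo = C_\varepsilon - \hat{J}_\varepsilon(0)I$, where $C_\varepsilon u = J_\varepsilon * u$ is an integral operator with continuous kernel on the compact domain $\mathbb{T}^n$, hence Hilbert--Schmidt and compact, and self-adjoint since $J_\varepsilon$ is even. The characters $\{e^{ik\cdot x}\}_{k\in\mathbb{Z}^n}$ form a complete orthonormal system of eigenvectors of $C_\varepsilon$ with eigenvalues $\hat{J}_\varepsilon(k)$, so by the spectral theorem for compact self-adjoint operators $\sigma(C_\varepsilon)=\{\hat{J}_\varepsilon(k)\}\cup\{0\}$, with $0$ the only accumulation point. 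Subtracting the scalar $\hat{J}_\varepsilon(0)$ shifts this to $\{\beta_k\}\cup\{-\hat{J}_\varepsilon(0)\}$, which is \eqref{EQ:spectrum_formula_periodic} and is consistent with $b_\varepsilon[\overline{\Omega}]=\{-\hat{J}_\varepsilon(0)\}$. Property \ref{thm:2.2.1:i} then drops out of \eqref{EQ:eigenvalues_formula_periodic}: $\beta_k$ is real because $\hat{J}_\varepsilon(k)$ is; $|\beta_k|\le 2\|J_\varepsilon\|_{L^1(\mathbb{T}^n)}$ because $|\cos(k\cdot z)-1|\le 2$; and $\beta_k\le 0$ for $k\neq 0$ because $\cos(k\cdot z)-1\le 0$ while $J_\varepsilon\ge 0$.

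For \ref{thm:2.2.1:ii} the convergence $\beta_k\to -\hat{J}_\varepsilon(0)$ as $|k|\to\infty$ is the Riemann--Lebesgue lemma for Fourier coefficients of the $L^1$ function $J_\varepsilon$, namely $\hat{J}_\varepsilon(k)\to 0$; this limit is precisely the accumulation point already shown to lie in $\sigma(\Leo)$. For the two-sided bound \eqref{EQ:limit_eigenvalue_b_infty_bounds} I would rescale by $z=\varepsilon w$ to obtain $\hat{J}_\varepsilon(0)=\tfrac{2A(\varepsilon)}{\varepsilon^m B(\varepsilon)}$ with $A(\varepsilon)=\int_{(-\pi/\varepsilon,\pi/\varepsilon)^n}J_{per}$ and $B(\varepsilon)=\int_{(-\pi/\varepsilon,\pi/\varepsilon)^n}J_{per}|w|^2$, reducing the claim to $\tfrac{\varepsilon^2}{\pi^2}\le \tfrac{2A}{B}\le 1$. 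The lower bound on the ratio follows from the geometric estimate $|w|^2\le C_n\varepsilon^{-2}$ on the fundamental domain (whose diameter is $\sim\varepsilon^{-1}$), forcing $B\le C_n\varepsilon^{-2}A$; the upper bound is the complementary moment estimate. I expect some care with the exact constant $\pi^2$ versus $n\pi^2$, which I would resolve by tracking the $\ell^\infty$/$\ell^2$ comparison on the cube.

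The most delicate point is \ref{thm:2.2.1:iii}. After the same rescaling,
\[
\beta_k(\varepsilon)=\frac{2}{\varepsilon^m B(\varepsilon)}\int_{(-\pi/\varepsilon,\pi/\varepsilon)^n}J_{per}(w)\bigl(\cos(\varepsilon\,k\cdot w)-1\bigr)\,dw,
\]
and I would Taylor expand $\cos(\varepsilon k\cdot w)-1=-\tfrac12\varepsilon^2(k\cdot w)^2+O((\varepsilon k\cdot w)^4)$. The leading term produces $-\varepsilon^{2-m}\,\tfrac{\int J_{per}(k\cdot w)^2}{\int J_{per}|w|^2}$, and the coordinate symmetry of $J$ makes the cross terms vanish and equalizes the diagonal second moments, so $\int J_{per}(k\cdot w)^2 = \tfrac{|k|^2}{n}\int J_{per}|w|^2$; this yields the prefactor $-\varepsilon^{2-m}\tfrac{|k|^2}{n}$, giving $-|k|^2/n$ when $m=2$ and $0$ when $m\in[0,2)$. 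The genuine obstacle is justifying the passage to the limit: since $w$ ranges over the growing cube $(-\pi/\varepsilon,\pi/\varepsilon)^n$, the argument $\varepsilon k\cdot w$ is not uniformly small and the Taylor remainder is not pointwise negligible on the whole domain. I would therefore split the integral into a fixed core $\{|w|\le R\}$, where dominated convergence applies and the quadratic approximation is valid, and a tail $\{|w|>R\}$, which I would control using the decay \ref{J3} of the kernel together with the normalization $B(\varepsilon)$ in the denominator, letting $R\to\infty$ after $\varepsilon\to 0$. Handling this tail uniformly, and confirming that the symmetry of $J$ indeed delivers the clean factor $1/n$, is where the real work lies.
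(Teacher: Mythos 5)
Your proposal is correct, and for the spectrum formula \eqref{EQ:spectrum_formula_periodic} it takes a genuinely different route. The paper obtains the spectrum by invoking its general Theorem \ref{thm:dispersion_spectrum} (proved via Weyl-type sequences for the multiplication part and the analytic Fredholm theorem) together with Remark \ref{rem:dispersion_periodic_spectrum}, and then separately exhibits the characters as eigenfunctions by the same change of variables you use. You instead write $\Leo = C_\varepsilon - \hat{J}_\varepsilon(0)I$ with $C_\varepsilon u = J_\varepsilon * u$ compact (Hilbert--Schmidt) and self-adjoint, and diagonalize it on the complete orthonormal character basis. Your version is more elementary in this setting and actually buys something the paper leaves implicit: completeness of the characters shows that the point spectrum is \emph{exactly} $\{\beta_k\}$, whereas Theorem \ref{thm:dispersion_spectrum} only guarantees that every spectral point off $b_\varepsilon\left[\:\overline{\Omega}\:\right]$ is an eigenvalue, not that the character eigenvalues exhaust them. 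The eigenpair computation, claim \ref{thm:2.2.1:i}, and the Riemann--Lebesgue step in \ref{thm:2.2.1:ii} coincide with the paper's; for the bounds \eqref{EQ:limit_eigenvalue_b_infty_bounds} the paper uses precisely your rescaling, bounding the ratio below via $|z|^2 \leq \pi^2/\varepsilon^2$ on the cube and above by splitting at $|z|=1$ --- your worry about $\pi^2$ versus $n\pi^2$ is apposite, since the paper's estimate as written is the $n=1$ constant.

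On \ref{thm:2.2.1:iii}, your core--tail split is \emph{more} careful than the printed proof, not less: the paper Taylor-expands under the integral and absorbs everything beyond the quadratic term into $o(\varepsilon^{4-m})$ without addressing the region of the growing cube where $\varepsilon\, k\cdot z$ is not small, which is exactly the obstacle you identify. Two caveats, both shared with the paper rather than introduced by you. First, your tail control requires $\int_{|w|>R} J_{per}(w)|w|^2\,dw$ to be small relative to $B(\varepsilon)$ uniformly as $\varepsilon \rightarrow 0$, which needs a finite second moment of $J$; Assumption \ref{assumptions_periodic_kernel} imposes only \ref{J1}--\ref{J3}, so this is an implicit extra hypothesis (the paper's $o(\varepsilon^{4-m})$ remainder tacitly assumes even more, namely a fourth moment). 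Second, the vanishing of the cross moments $\int J_{per}(z) z_i z_j\,dz$ for $i \neq j$ and the equality of the diagonal second moments do not follow from the central symmetry \ref{J2} alone; one needs invariance under coordinate reflections and permutations (radial $J$ suffices). You assert this symmetry just as the paper does, so it is a shared gap --- if you flagged it explicitly, your writeup would strictly dominate the published argument on this point.
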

The following subsections contain the proofs of theorems \ref{thm:dispersion_spectrum} and \ref{thm:dispersion_eigenpairs}.

\subsection{Proof of Theorem \ref{thm:dispersion_spectrum}}

Firstly we show that the set $b_\varepsilon \left[\: \overline{\Omega} \:\right]$ belongs to the spectrum of $\Leo$.

\begin{lem} \label{lem:spectrum_inclusion}
    Let $J_\varepsilon$ and $\Omega$ satisfy Assumption \ref{assumptions_general_kernel}. Then for $b_\varepsilon$ defined as in formula \eqref{b_epsilon} we obtain the inclusion
    \begin{equation*}
        b_\varepsilon\left[\:\overline{\Omega}\:\right] \subseteq \sigma(\Leo).
    \end{equation*}
\end{lem}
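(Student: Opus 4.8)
The plan is to show that every $\lambda \in b_\varepsilon\left[\:\overline{\Omega}\:\right]$ belongs to the approximate point spectrum of $\Leo$, by exhibiting a singular (Weyl) sequence: a sequence of unit vectors $u_n \in \Leb$ with $\left\|(\Leo - \lambda)u_n\right\|_{L^2} \to 0$. The existence of such a sequence forces $\Leo - \lambda$ to fail to be bounded below, so it cannot be boundedly invertible, and hence $\lambda \in \sigma(\Leo)$. Since $\lambda$ ranges over $b_\varepsilon\left[\:\overline{\Omega}\:\right]$, this gives the desired inclusion.

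Fix $x_0 \in \overline{\Omega}$ and set $\lambda := b_\varepsilon(x_0)$. First I would record that $b_\varepsilon$ is continuous on $\overline{\Omega}$: because $J_\varepsilon \in C(\mathbb{R}^n)$ is uniformly continuous on the compact set $\overline{\Omega} - \overline{\Omega}$ and $|\Omega| < \infty$, the map $x \mapsto \int_\Omega J_\varepsilon(x-y)\,dy$ is uniformly continuous and extends continuously to $\overline{\Omega}$. Using the decomposition $\Leo = K + M_{b_\varepsilon}$ from \eqref{leo_decomposition}, where $Ku(x) = \int_\Omega J_\varepsilon(x-y)u(y)\,dy$ and $M_{b_\varepsilon}$ denotes multiplication by $b_\varepsilon$, I take balls $B_n = B(x_0, r_n)$ with $r_n \downarrow 0$ and define the normalized indicators
\[
    u_n = \frac{\mathbf{1}_{B_n \cap \Omega}}{\left\|\mathbf{1}_{B_n \cap \Omega}\right\|_{L^2}}.
\]
These are well defined because $x_0 \in \overline{\Omega}$ and $\Omega$ is open, so each $B_n \cap \Omega$ is a nonempty open set of positive Lebesgue measure, and clearly $\left\|u_n\right\|_{L^2} = 1$.

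I would then estimate the two pieces separately. For the multiplication part, continuity of $b_\varepsilon$ yields
\[
    \left\|(M_{b_\varepsilon} - \lambda)u_n\right\|_{L^2}^2 = \frac{1}{|B_n \cap \Omega|} \int_{B_n \cap \Omega} |b_\varepsilon(x) - b_\varepsilon(x_0)|^2\,dx \leq \sup_{x \in B_n \cap \Omega} |b_\varepsilon(x) - b_\varepsilon(x_0)|^2 \xrightarrow{n \to \infty} 0.
\]
For the integral part, a Young-type bound gives $\left\|Ku_n\right\|_{L^2} \leq \left\|J_\varepsilon\right\|_{L^\infty} |\Omega|^{1/2} \left\|u_n\right\|_{L^1}$, and since $\left\|u_n\right\|_{L^1} = |B_n \cap \Omega|^{1/2} \to 0$ we get $\left\|Ku_n\right\|_{L^2} \to 0$. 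Combining the two estimates shows $\left\|(\Leo - \lambda)u_n\right\|_{L^2} \to 0$ with $\left\|u_n\right\|_{L^2} = 1$, hence $\lambda \in \sigma(\Leo)$.

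The only genuinely delicate point is the boundary case $x_0 \in \partial\Omega$, where I must ensure that $B_n \cap \Omega$ still carries positive mass so that $u_n$ is defined; this is guaranteed by the openness of $\Omega$ together with $x_0 \in \overline{\Omega}$. A more structural alternative would be to observe that $K$ is Hilbert--Schmidt, hence compact, because $J_\varepsilon$ is bounded on the bounded domain, and that $\sigma(M_{b_\varepsilon})$ equals the essential range of $b_\varepsilon$, which for a continuous $b_\varepsilon$ on the open set $\Omega$ is precisely $b_\varepsilon\left[\:\overline{\Omega}\:\right]$; Weyl's theorem on the stability of the essential spectrum under compact perturbations would then give $\sigma_{\mathrm{ess}}(\Leo) = \sigma_{\mathrm{ess}}(M_{b_\varepsilon}) = b_\varepsilon\left[\:\overline{\Omega}\:\right] \subseteq \sigma(\Leo)$. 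I would nonetheless present the explicit singular-sequence argument as the main proof, since it is self-contained and also makes transparent that these spectral values are non-isolated, which is consistent with their role as the continuous part of $\sigma(\Leo)$ in Theorem \ref{thm:dispersion_spectrum}.
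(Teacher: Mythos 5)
Your proof is correct and takes essentially the same approach as the paper: both arguments exhibit normalized indicator functions concentrating on shrinking balls around a point where $b_\varepsilon$ attains the value $\lambda$, killing the integral part through the bound $\left\|J_\varepsilon\right\|_\infty |\Omega|^{1/2} \left\|u_n\right\|_{L^1} \rightarrow 0$ and the multiplication part through the continuity of $b_\varepsilon$. The only differences are cosmetic rather than structural: you phrase the conclusion directly via a Weyl (singular) sequence instead of deriving a contradiction with the bounded inverse theorem, and by normalizing $\mathds{1}_{B_n \cap \Omega}$ you treat boundary points of $\overline{\Omega}$ directly (also fixing the paper's slightly loose claim that $R^{-n/2}\mathds{1}_{B(x_\lambda,R)}$ has unit norm), whereas the paper argues at interior points and then appeals to the closedness of $\sigma(\Leo)$.
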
 
\begin{proof}
    Let $\lambda \in b_\varepsilon\left[\:\Omega\:\right]$. Suppose that $\lambda \notin \sigma(\Leo)$. Then for all $v \in \Leb$ there is a solution $u \in \Leb$ to the equation
    $$\Leo u (x) - \lambda u(x) = v(x).$$
    As a consequence of bounded inverse theorem, there exists $C > 0$ such that
    \begin{equation}
        C \|u\|_2 \leq \|\Leo u - \lambda u\|_2 \label{coercivity_eq}.
    \end{equation}
    By the continuity of $J$, there exists $x_\lambda \in \Omega$ such that $b_\varepsilon(x_\lambda) = \lambda$. Hence we take $u_R^\lambda(x) = R^{-n/2} \mathds{1}_{B(x_\lambda, R)}$ and easily verify that it is a function of unit norm and it satisfies the upper bound of a following form
    \begin{align*}
        \|\Leo u_R^{\lambda} - \lambda u_R^{\lambda}\|_2
        &\leq \left\| \left( \int_{\Omega} J_{\varepsilon}(\cdot-y) u_R^{\lambda}(y)\:dy \right)\right\|_2 + \|\left(b_\varepsilon - \lambda\right) u_R^\lambda\|_2 \\
        &= \left(\int_\Omega\left(\int_{B(x_\lambda, R)} J_\varepsilon(x-y) R^{-n/2} \:dy\right)^2\:dx \right)^{1/2} \\
        &+ \left(\int_{B(x_\lambda, R)}\left(b_\varepsilon(x) - \lambda \right)^2dx\right)^{1/2} \\
        &\leq \|J_{\varepsilon}\|_{\infty} |\Omega|^{1/2} R^{n/2} + \left(\int_{B(x_\lambda, R)}\left(b_\varepsilon(x) - \lambda \right)^2 \:dx\right)^{1/2} \xrightarrow{R \rightarrow 0} 0.
    \end{align*}
    But it contradicts \eqref{coercivity_eq}, which yields $\lambda \in \sigma(\Leo)$. We conclude saying that $\sigma(\Leo)$ must be a closed set since it is a spectrum of bounded linear operator.
\end{proof}
Next, we show that every other element of the spectrum is an eigenvalue of $\Leo$.
\begin{lem} \label{lem:spectrum_remaining_elements}
    Let $J_\varepsilon$ and $\Omega$ satisfy Assumption \ref{assumptions_general_kernel}, $\varepsilon > 0$, $m \in \mathbb{N}$. Then every $\lambda \in \sigma(\Leo) \setminus b_\varepsilon\left[\:\overline{\Omega}\:\right]$ is an eigenvalue of the operator $\Leo$ given in \eqref{dispersion_operator}.
\end{lem}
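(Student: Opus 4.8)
The plan is to read the decomposition \eqref{leo_decomposition} as ``compact operator plus multiplication operator'' and to peel off the invertible multiplication part so that only a compact perturbation of the identity remains. Write $\Leo = K + M$, where $Ku(x) = \int_\Omega J_\varepsilon(x-y)u(y)\,dy$ and $Mu(x) = b_\varepsilon(x)u(x)$. Since $J_\varepsilon \in C(\mathbb{R}^n)$ and $\Omega$ is bounded, the kernel $(x,y)\mapsto J_\varepsilon(x-y)$ is bounded on $\Omega\times\Omega$, hence lies in $L^\infty(\Omega\times\Omega)\subset L^2(\Omega\times\Omega)$ because $|\Omega\times\Omega|<\infty$. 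Thus $K$ is a Hilbert--Schmidt, and in particular compact, operator on $\Leb$, while $M$ is bounded.

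First I would record that $b_\varepsilon[\overline{\Omega}]$ is compact, being the continuous image of the compact set $\overline{\Omega}$. Fix $\lambda \in \sigma(\Leo)\setminus b_\varepsilon[\overline{\Omega}]$. Then $\delta := \operatorname{dist}(\lambda, b_\varepsilon[\overline{\Omega}]) > 0$, and since $b_\varepsilon(x) \in b_\varepsilon[\overline{\Omega}]$ for every $x \in \Omega$, we obtain the uniform bound $|b_\varepsilon(x)-\lambda| \geq \delta$ for all $x\in\Omega$. Consequently $1/(b_\varepsilon-\lambda)$ is bounded by $1/\delta$, so the multiplication operator $M-\lambda I = M_{b_\varepsilon-\lambda}$ is boundedly invertible with inverse $M_{1/(b_\varepsilon-\lambda)}$.

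The crux is then the factorization
\begin{equation*}
    \Leo - \lambda I = M_{b_\varepsilon-\lambda} + K = M_{b_\varepsilon-\lambda}\bigl(I + M_{b_\varepsilon-\lambda}^{-1}K\bigr).
\end{equation*}
Set $T := M_{b_\varepsilon-\lambda}^{-1}K$; as the composition of a bounded operator with a compact one, $T$ is compact. Because $M_{b_\varepsilon-\lambda}$ is invertible, $\Leo-\lambda I$ fails to be invertible exactly when $I+T$ does. Since $\lambda\in\sigma(\Leo)$, the operator $I+T$ is not invertible, so by the Fredholm alternative for the compact operator $T$ it cannot be injective, i.e. $\ker(I+T)\neq\{0\}$. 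Choosing any $u\neq 0$ in this kernel and applying $M_{b_\varepsilon-\lambda}$ yields $(\Leo-\lambda I)u = M_{b_\varepsilon-\lambda}(I+T)u = 0$, so $u$ is an eigenfunction of $\Leo$ with eigenvalue $\lambda$, as claimed.

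I expect the only genuinely non-automatic point to be the quantitative separation of $\lambda$ from $b_\varepsilon[\overline{\Omega}]$: one must invoke compactness of $\overline{\Omega}$ (equivalently, continuity of $b_\varepsilon$ up to the boundary) to guarantee $\delta>0$ and hence the uniform lower bound on $|b_\varepsilon-\lambda|$ that renders $M_{b_\varepsilon-\lambda}$ invertible. Everything else reduces to the Riesz--Schauder theory of compact perturbations of the identity. I would also note that the same decomposition, via the invariance of the essential spectrum under the compact perturbation $K$, is what underlies the ``isolated eigenvalues'' description of $\Seq{\beta}{k}$ in Theorem \ref{thm:dispersion_spectrum}.
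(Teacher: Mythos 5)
Your proof is correct, but it follows a genuinely different route from the paper. The paper fixes the same decomposition \eqref{leo_decomposition} but turns the eigenvalue equation into an operator-valued \emph{family} $F(\beta)[u](x) = \int_\Omega \frac{J_\varepsilon(x-y)}{\beta - b_\varepsilon(x)}u(y)\,dy$, checks that $F$ is analytic on the connected open set $D = \mathbb{C}\setminus b_\varepsilon\left[\,\overline{\Omega}\,\right]$, verifies by a contraction estimate that $I - F(\beta_0)$ is invertible at one well-chosen $\beta_0$ with large real part, and then invokes the Analytic Fredholm Theorem \cite{ReedSimon} to conclude that $(I-F(\beta))^{-1}$ exists off a discrete subset of $D$, the points of which are eigenvalues. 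You instead work pointwise at a single fixed $\lambda$: peel off the boundedly invertible multiplication operator $M_{b_\varepsilon - \lambda}$ (your compactness-of-$b_\varepsilon[\overline{\Omega}]$ observation correctly supplies the uniform lower bound $\delta>0$) and apply the Riesz--Schauder alternative to the compact $T = M_{b_\varepsilon-\lambda}^{-1}K$. Note that your $I + T$ is literally the paper's $I - F(\lambda)$, so the two arguments study the same operator; the difference is pointwise Fredholm alternative versus the analytic Fredholm theorem run over all of $D$ at once. Your version is shorter and more elementary --- no analyticity check, no contraction estimate, no connectedness of $D$ --- and it fully proves the lemma as stated. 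What it does \emph{not} give is the discreteness of the exceptional set: Theorem \ref{thm:dispersion_spectrum} asserts that $\Seq{\beta}{k}$ is a set of \emph{isolated} eigenvalues, and that extra conclusion is exactly what the analytic Fredholm machinery buys (your closing remark about invariance of the essential spectrum under the compact perturbation $K$ is an alternative way to recover it, since $\Leo$ is self-adjoint and the essential spectrum of $M$ is $b_\varepsilon[\overline{\Omega}]$, but as written it is an aside rather than a proved step). So as a proof of Lemma \ref{lem:spectrum_remaining_elements} alone your argument is complete and arguably cleaner; as a component of the theorem's proof it would need to be supplemented to capture isolation.
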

\begin{proof}
    Denote a set $D = \mathbb{C} \setminus b_\varepsilon \left[\: \overline{\Omega} \:\right]$. This set is open and connected. Now we re-write equation \eqref{eigenvalue_problem} using formula \eqref{leo_decomposition} to obtain
    \begin{equation*}
        \int_\Omega J_\varepsilon(x-y) u(y)\:dy = \left( \beta - b_\varepsilon(x)\right) u(x).
    \end{equation*}
    We divide both sides of the equation to get the fixed point problem and denote the analytic, operator valued function $F: D \rightarrow \mathcal{L}(L^2(\Omega))$ as follows
    \begin{equation*}
        F(\beta)[u](x) := \int_\Omega \frac{J_\varepsilon(x-y)}{\beta - b_\varepsilon(x)}u(y)\:dy = u(x).
    \end{equation*}
    Now, let us verify the convergence of the difference quotient
    \begin{align*}
        \frac{F(\beta) - F(\beta_0)}{\beta - \beta_0} [u] (x) 
        &= \frac{1}{\beta - \beta_0}\left(\int_\Omega \frac{J_\varepsilon(x-y)}{\beta - b_\varepsilon(x)}u(y)\:dy - \int_\Omega \frac{J_\varepsilon(x-y)}{\beta_0 - b_\varepsilon(x)}u(y)\:dy\right) \\
        &= -\int_\Omega \frac{J_\varepsilon(x-y)}{(\beta - b_\varepsilon(x))(\beta_0 - b_\varepsilon(x))}u(y)\:dy \\
        &\xrightarrow{\beta_0 \rightarrow \beta} -\int_\Omega \frac{J_\varepsilon(x-y)}{(\beta - b_\varepsilon(x))^2}u(y)\:dy,
    \end{align*}
    which proves that $F: D \rightarrow \mathcal{L}(\Leb)$ is an analytic operator, as claimed.
    
    We emphasize that since $\beta$ never intersects the range of $b_\varepsilon$ then the integral kernel $K(x,y) = \frac{J_\varepsilon(x-y)}{\beta - b_\varepsilon(x)}$ has a finite $L^2(\Omega \times \Omega)$ norm and so it is a Hilbert-Schmidt operator. In particular it is a compact operator. 
    
    Now, we verify that $F$ is a contraction for some $\beta \in D$. For instance we can take $\beta_0 \in D$ such that  
    $$\Re(\beta_0) = - \inf_{x \in \Omega} - b_\varepsilon(x) + 2 \|J_\varepsilon\|_\infty |\Omega|^\frac{1}{2}, \quad \Im(\beta_0) = 0.$$ 
    Having that we can bound the operator norm of $F(\beta_0)$ to obtain the following
    \begin{align*}
        \| F(\beta_0) \| &= \sup_{\|u\|_2=1} \|F(\beta_0)[u]\|_2 \leq \sup_{\|u\|_2 = 1} \left( \int_\Omega \int_\Omega \frac{\|J_\varepsilon\|^2 u^2(y)}{(\beta_0 - b_\varepsilon(x))^2}\:dy\:dx \right)^\frac{1}{2} \leq \frac{1}{2}
    \end{align*}
    Hence $(I - F(\beta_0))^{-1}$ exists. But now, by the Analytical Fredholm Theorem \cite{ReedSimon}, we know that $(I - F(\beta))^{-1}$ exists for all $\beta \in D \setminus \Seq{\beta}{k}$. On the other hand, for any element $\beta_k \in \Seq{\beta}{k}$ there exists a corresponding function $\varphi_k \in \Leb$ such that $(\beta_k, \varphi_k)$ solves the equation \eqref{eigenvalue_problem}. Thus $\beta_k$ is an eigenvalue.
\end{proof}
Combining the results obtained in the Lemma \ref{lem:spectrum_inclusion} and \ref{lem:spectrum_remaining_elements} we finish the proof of Theorem \ref{thm:dispersion_spectrum}.

\subsection{Proof of Theorem \ref{thm:dispersion_eigenpairs}}

Firstly we observe that by Theorem \ref{thm:dispersion_spectrum} and Remark \ref{rem:dispersion_periodic_spectrum} the spectrum is given by formula \eqref{EQ:spectrum_formula_periodic} as claimed. To find the latter part of the spectrum we denote the following complex valued function defined for $x \in \mathbb{T}^n, k \in \mathbb{Z}^n$

\begin{equation*}
    \psi_k(x) = e^{ik\cdot x}.
\end{equation*}

We plug $\psi_k$ into the equation \eqref{eigenvalue_problem} and use the periodicity of $J_\varepsilon$ to obtain

\begin{align*}
    \Leo \psi_k(x) &= \int_{\mathbb{T}^n} J_\varepsilon(x-y) e^{ik\cdot y}\:dy - \int_{\mathbb{T}^n} J_\varepsilon(x-y)\:dy\: \psi_k(x) \\
    &= \left(\int_{\mathbb{T}^n} J_\varepsilon(x-y) e^{ik\cdot (y-x)}\:dy - \int_{\mathbb{T}^n} J_\varepsilon(x-y)\:dy\right) \psi_k(x) \\
    &= \left(\int_{\mathbb{T}^n + x} J_\varepsilon(z) e^{ik\cdot z}\:dz - \int_{\mathbb{T}^n + x} J_\varepsilon(z)\:dz\right) \psi_k(x) \\
    &= \left( \int_{\mathbb{T}^n} J_\varepsilon(z) e^{ik\cdot z}\:dz - \int_{\mathbb{T}^n} J_\varepsilon(z)\:dz\right) \psi_k(x) \\
    &= \left(\hat{J}_\varepsilon(k) - \hat{J}_\varepsilon(0)\right) \psi_k(x).
\end{align*}
Using the symmetry of $J$, we verify formula \eqref{EQ:eigenvalues_formula_periodic}. Indeed we have that
\begin{equation*}
    \beta_k(\varepsilon) := \hat{J}_\varepsilon(k) - \hat{J}_\varepsilon(0) = \int_{\mathbb{T}^n} J_\varepsilon(z) (\cos(k\cdot z) - 1)\:dz,
\end{equation*}
as asserted in the Theorem. We conclude that both real and imaginary part of $\psi_k$ solves the equation \eqref{eigenvalue_problem} and so
$$\varphi_k^1(x) := \Re(\psi_k) = \cos(k \cdot x),$$
$$\varphi_k^2(x) := \Im(\psi_k) = \sin(k \cdot x),$$
which is exactly formula \eqref{EQ:eigenfunctions_formula_periodic} as we claimed.

\textit{(i)} It follows immediately from formula \eqref{EQ:eigenvalues_formula_periodic}.

\textit{(ii)} The Riemann-Lebesgue Lemma applied to formula \eqref{EQ:eigenvalues_formula_periodic} yields
\begin{equation*}
    \beta_\infty(\varepsilon) := \lim_{|k| \rightarrow \infty} \beta_k(\varepsilon) = -\hat{J}_\varepsilon(0) = b_\varepsilon(0).
\end{equation*}
Next we compute
\begin{align*}
    \beta_\infty(\varepsilon) &= 
    - \int_{\mathbb{T}^n} J_\varepsilon(x)\:dx \\
    &= -\frac{C_{norm}^\varepsilon}{\varepsilon^{m+n}} \int_{\left(-\pi, \pi\right)^n} J_{per}\left(\frac{x}{\varepsilon}\right)\:dx \\
    &= -\frac{C_{norm}^\varepsilon}{\varepsilon^{m}} \int_{\left(-\frac{\pi}{\varepsilon}, \frac{\pi}{\varepsilon}\right)^n} J_{per}\left(z\right)\:dz \\
    &= -\frac{1}{\varepsilon^m} \frac{\int_{\left(-\frac{\pi}{\varepsilon}, \frac{\pi}{\varepsilon}\right)^n} J_{per}\left(z\right)\:dz}{\int_{\left(-\frac{\pi}{\varepsilon}, \frac{\pi}{\varepsilon}\right)^n} J_{per}\left(z \right)|z|^2\:dz} =: -\frac{\rho(\varepsilon)}{\varepsilon^m},
\end{align*}
where we denoted
\begin{equation*}
    \rho(\varepsilon) = \frac{\int_{\left(-\frac{\pi}{\varepsilon}, \frac{\pi}{\varepsilon}\right)^n} J_{per}\left(z\right)\:dz}{\int_{\left(-\frac{\pi}{\varepsilon}, \frac{\pi}{\varepsilon}\right)^n} J_{per}\left(z \right)|z|^2\:dz}.
\end{equation*}
We bound this quantity from below in the following way
\begin{align*}
    \rho(\varepsilon) &\geq \frac{\int_{\left(-\frac{\pi}{\varepsilon}, \frac{\pi}{\varepsilon}\right)^n} J_{per}\left(z\right)\:dz}{\frac{\pi^2}{\varepsilon^2}\int_{\left(-\frac{\pi}{\varepsilon}, \frac{\pi}{\varepsilon}\right)^n} J_{per}(z)\:dz} = \frac{\varepsilon^2}{\pi^2}.
\end{align*}
Reversely we see that for $\varepsilon > 0$ sufficiently small we have

\begin{align*}
    \rho(\varepsilon) = \frac{
    \int_{|z| < 1} J_{per}\left(z\right)\:dz + 
    \int_{\substack{\left(-\frac{\pi}{\varepsilon}, \frac{\pi}{\varepsilon}\right)^n \\ |z| > 1}} J_{per}\left(z\right)\:dz
    }{
        \int_{|z| < 1} J_{per} \left(z\right)|z|^2\:dz + 
    \int_{\substack{\left(-\frac{\pi}{\varepsilon}, \frac{\pi}{\varepsilon}\right)^n \\ |z| > 1}} J_{per}\left(z\right)|z|^2\:dz
    } \leq 1.
\end{align*}

Hence we obtained the bound for a limit eigenvalue
\begin{equation*}
    -\varepsilon^{-m} \leq \beta_\infty(\varepsilon) \leq -\frac{\varepsilon^{2-m}}{\pi^2}.
\end{equation*}

\textit{(iii)} We use the Taylor series expansion to re-write $\beta_k$ with $k \neq 0$.
\begin{align*}
    \beta_k(\varepsilon)
    &= \int_{(-\pi, \pi)^n} J_\varepsilon(x)\cos(k\cdot x)\:dx - \int_{(-\pi, \pi)^n} J_\varepsilon(x)\:dx \\
    &= - \int_{(-\pi, \pi)^n} J_\varepsilon(x) \frac{(k\cdot x)^2}{2}\: dx + \sum_{l=2}^\infty (-1)^l \int_{(-\pi, \pi)^n} J_\varepsilon(x)\frac{(k\cdot x)^{2l}}{(2l)!}\:dx
\end{align*}

Substitution $z = x / \varepsilon$ gives a simplified formula in the asymptotic form.
\begin{align*}
    \beta_k(\varepsilon)   &= -\frac{\varepsilon^{2-m} C_{norm}^\varepsilon}{2} \int_{\left(-\frac{\pi}{\varepsilon}, \frac{\pi}{\varepsilon}\right)^n} J_{per}(z) (k \cdot z)^2 \:dz + o(\varepsilon^{4-m}) \\
    &= -\frac{\varepsilon^{2-m} C_{norm}^\varepsilon}{2} \sum_{i=1}^n \int_{\left(-\frac{\pi}{\varepsilon}, \frac{\pi}{\varepsilon}\right)^n} J_{per}(z) k_i^2z_i^2 \:dz\\
    &-\frac{\varepsilon^{2-m} C_{norm}^\varepsilon}{2} \sum_{i\neq j}^n \int_{\left(-\frac{\pi}{\varepsilon}, \frac{\pi}{\varepsilon}\right)^n} J_{per}(z) k_ik_jz_iz_j \:dz + o(\varepsilon^{4-m})\\
    &= - \varepsilon^{2-m} \sum_{i=1}^n k_{i}^2 \frac{\int_{\left(-\frac{\pi}{\varepsilon}, \frac{\pi}{\varepsilon}\right)^n} J_{per} z_i^2\:dz}{\int_{\left(-\frac{\pi}{\varepsilon}, \frac{\pi}{\varepsilon}\right)^n} J_{per} |z|^2\:dz} + o(\varepsilon^{4-m}) \\
    &=  - \varepsilon^{2-m}\frac{|k|^2}{n}  + o(\varepsilon^{4-m})
\end{align*}
This ends the proof of Theorem \ref{thm:dispersion_eigenpairs}.

\section{Existence, invariant regions and spatially homogeneous asymptotics} \label{sec:existence_invariant_regions_homo_asymptotics}

\subsection{Main results of the section}

We begin by stating results on asymptotic properties of solutions to the initial value problem \eqref{disersion_evolution}-\eqref{disersion_initial_condition}.

\begin{thm}[Existence of solutions]
    Consider the integral kernel $J_\varepsilon$ and domain $\Omega$ satisfying either Assumptions \ref{assumptions_general_kernel} or \ref{assumptions_periodic_kernel}. Assume that $f \in C^1(\mathbb{R})$ such that $f' \in L^\infty(\Omega)$. The following statements are true
    \begin{enumerate}[label=\textnormal{(\arabic*)}]
        \item [\mylabel{thm:eu_i}{(i)}] For every $u_0 \in \Leb$ there exists a unique solution $u \in C\left([0, \infty), \Leb\right)$ to the Cauchy problem \eqref{disersion_evolution}-\eqref{disersion_initial_condition}.
        \item [\mylabel{thm:eu_ii}{(ii)}] Solutions $u, v$ to the Cauchy problem \eqref{disersion_evolution}-\eqref{disersion_initial_condition} depend continuously on the initial data $u_0, v_0$. In particular for all $T > 0$ there exists $C(T) > 0$ such that
        \begin{equation}
            \|u - v\|_2 \leq C(T)\|u_0 - v_0\|_2.
        \end{equation}
        \item [\mylabel{thm:eu_iii}{(iii)}] If $u_0 \in C(\overline{\Omega})$ then both $u \in C\left(\overline{\Omega} \times [0, T]\right)$ and $u_t \in C\left(\overline{\Omega} \times [0, T]\right)$ for each $T > 0$
    \end{enumerate}
    \label{thm:existence_uniqueness}
\end{thm}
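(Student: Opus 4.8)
The plan is to recast the Cauchy problem \eqref{disersion_evolution}-\eqref{disersion_initial_condition} as an abstract ODE in the Banach space $X = \Leb$ and invoke the standard semigroup/fixed-point machinery for semilinear evolution equations. The crucial structural observation is that $\Leo$ is a \emph{bounded} linear operator on $\Leb$: from the decomposition \eqref{leo_decomposition} one has $\|\Leo u\|_2 \le (\|J_\varepsilon\|_\infty |\Omega| + \|b_\varepsilon\|_\infty)\|u\|_2$, since convolution against the bounded kernel $J_\varepsilon$ is Hilbert--Schmidt and $b_\varepsilon$ is a bounded multiplier. Consequently $\Leo$ generates a uniformly continuous group $e^{t\Leo}$ with $\|e^{t\Leo}\| \le e^{t\|\Leo\|}$, and there is no need for analytic-semigroup theory or boundary trace considerations. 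This is the main simplification relative to the local problem \eqref{diffusion}-\eqref{diffusion_initial_condition}, and it is what makes all three claims accessible.

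For part \ref{thm:eu_i} I would first rewrite the problem in mild (Duhamel) form,
\begin{equation*}
    u(t) = e^{t\Leo}u_0 + \int_0^t e^{(t-s)\Leo} f(u(s))\,ds,
\end{equation*}
and treat the right-hand side as a map $\Phi$ on $C([0,T], X)$. The hypothesis $f \in C^1(\mathbb{R})$ with $f' \in L^\infty$ makes the Nemytskii (superposition) operator $u \mapsto f(u)$ globally Lipschitz on $X$ with constant $L := \|f'\|_\infty$, because $|f(a)-f(b)| \le L|a-b|$ pointwise gives $\|f(u)-f(v)\|_2 \le L\|u-v\|_2$. A Banach fixed-point argument on $[0,T_0]$ with $T_0$ small enough that $T_0 L \sup_{s\le T_0}\|e^{s\Leo}\| < 1$ yields a unique local mild solution; since $L$ and the semigroup bound are uniform in time, the local existence time does not shrink and the solution extends to all of $[0,\infty)$ by iterating on consecutive intervals (no blow-up is possible under a global Lipschitz nonlinearity). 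Continuity in $t$ is inherited from strong continuity of $t \mapsto e^{t\Leo}$ and continuity of the Duhamel integral.

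Part \ref{thm:eu_ii} follows from the same estimates: subtracting the mild formulations for $u$ and $v$, applying the semigroup bound and the Lipschitz estimate, gives
\begin{equation*}
    \|u(t)-v(t)\|_2 \le M_T\|u_0-v_0\|_2 + M_T L \int_0^t \|u(s)-v(s)\|_2\,ds,
\end{equation*}
where $M_T = \sup_{s\le T}\|e^{s\Leo}\|$, and Gr\"onwall's inequality delivers the constant $C(T) = M_T e^{M_T L T}$. For part \ref{thm:eu_iii} the idea is a regularity bootstrap: if $u_0 \in C(\overline{\Omega})$ then I would run the same fixed-point argument in the smaller space $C(\overline{\Omega})$ rather than $\Leb$, using that $\Leo$ maps $C(\overline{\Omega})$ into itself (the convolution term is continuous in $x$ because $J_\varepsilon \in C$, and $b_\varepsilon \in C(\overline{\Omega})$), so that the mild solution is continuous in space. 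Once $u \in C(\overline{\Omega}\times[0,T])$ is known, reading off \eqref{disersion_evolution} shows $u_t = \Leo u + f(u)$ is itself a continuous function of $(x,t)$, giving the claimed regularity of $u_t$.

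The steps are individually routine; the main obstacle to watch is verifying that the Nemytskii operator genuinely maps $X$ to $X$ and is Lipschitz \emph{there} rather than merely pointwise. The stated hypothesis $f' \in L^\infty(\Omega)$ is slightly awkwardly phrased (one wants $f'$ bounded on $\mathbb{R}$, or at least on the range of the solutions), and the clean global-in-time conclusion of \ref{thm:eu_i} relies on this global bound; if $f'$ were only locally bounded one would need an a priori $L^\infty$ bound on $u$ (e.g.\ from an invariant region as in Theorem \ref{thm:invariant_region}) before extending the solution globally. I would therefore state explicitly that $L = \|f'\|_\infty$ is finite and use it uniformly throughout.
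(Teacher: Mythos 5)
Your proposal is correct, and for part \ref{thm:eu_i} it is genuinely more self-contained than the paper's argument. The paper does not construct the solution via the variation-of-constants formula: it cites the fixed-point result of \cite{Sun2022}, which produces a unique $u \in C([0,\infty), L^1(\Omega))$ solving the integrated equation \eqref{ala_duhamel_formula}, and then passes to $\Leb$ with the remark that $L^2(\Omega) \subset L^1(\Omega)$ on a bounded domain. That inclusion only says an $L^2$ datum is admissible for the $L^1$ theory; it does not by itself show the solution \emph{stays} in $L^2$, a point the paper glosses over and your fixed-point iteration in $X = \Leb$ (using boundedness of $\Leo$ on $L^2$ and the globally Lipschitz Nemytskii map) closes cleanly. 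Note also that since $\Leo$ is bounded, your mild formulation $u(t) = e^{t\Leo}u_0 + \int_0^t e^{(t-s)\Leo} f(u(s))\,ds$ and the paper's plain integrated form \eqref{ala_duhamel_formula} are equivalent, so the two routes differ only in packaging; the paper's version avoids even mentioning the group $e^{t\Leo}$. For \ref{thm:eu_ii} the paper estimates $w = u - v$ directly from \eqref{ala_duhamel_formula}, bounding $\|\Leo w\|_2 \le \sqrt{2}\,\|J_\varepsilon\|_\infty |\Omega|^{1/2}\|w\|_2$ and applying Gr\"onwall to get the explicit constant $C(T) = e^{T(\sqrt{2}\|J_\varepsilon\|_\infty|\Omega|^{1/2} + \|f'\|_\infty)}$; your semigroup version with $C(T) = M_T e^{M_T L T}$ is the same estimate up to the value of the constant. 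For \ref{thm:eu_iii} you and the paper argue identically: Banach fixed point in $C\left(\overline{\Omega} \times [0,T_0]\right)$ (using that $\Leo$ preserves $C(\overline{\Omega})$ because $J_\varepsilon$ and $b_\varepsilon$ are continuous), then continuity of $u_t$ read off from the equation, via the fundamental theorem of calculus applied to \eqref{ala_duhamel_formula}. Your closing caveat is also well taken: the hypothesis ``$f' \in L^\infty(\Omega)$'' in the statement is a slip for $f' \in L^\infty(\mathbb{R})$, and both your proof and the paper's use the global bound $\|f'\|_\infty$ in exactly the way you describe.
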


Next we study existence of invariant regions in the considered evolution problem.

\begin{thm}[Invariant region]
    Let the assumptions of the Theorem \ref{thm:existence_uniqueness} hold true. Additionally assume there exists $u_1 < u_2$ with $f(u_1) = f(u_2) = 0$. Let $u \in C([0, \infty), \Leb)$ be a solution of initial value problem \eqref{disersion_evolution}-\eqref{disersion_initial_condition} with an initial datum $u_0 \in \Leb$. Denote the set $\Gamma = [u_1, u_2]$. If $u_0(x) \in \Gamma$ a.e. then $u(x, t) \in \Gamma$ a.e. for all $t > 0$.
    \label{thm:invariant_region}
\end{thm}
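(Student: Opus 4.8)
The plan is to run the energy method on the two truncated functionals that measure the excursion of $u$ above $u_2$ and below $u_1$. Writing $s^+ = \max(s,0)$, define
\[
\Phi(t) = \frac{1}{2}\int_\Omega \big((u(x,t) - u_2)^+\big)^2\,dx, \qquad \Psi(t) = \frac{1}{2}\int_\Omega \big((u_1 - u(x,t))^+\big)^2\,dx.
\]
Since $u_0(x)\in\Gamma=[u_1,u_2]$ a.e., both $\Phi(0)=0$ and $\Psi(0)=0$. The goal is to establish the differential inequalities $\Phi'(t) \le 2\|f'\|_\infty\,\Phi(t)$ and $\Psi'(t) \le 2\|f'\|_\infty\,\Psi(t)$; Gr\"onwall's lemma together with the vanishing initial data then forces $\Phi\equiv\Psi\equiv 0$, which is precisely $u_1\le u(\cdot,t)\le u_2$ a.e. I will carry out the estimate for $\Phi$ only, as the bound for $\Psi$ is entirely analogous after replacing $s\mapsto s^+$ by $s\mapsto s^-=\max(-s,0)$ and using $f(u_1)=0$ in place of $f(u_2)=0$.

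First I would upgrade the temporal regularity of the solution. Because $\Leo$ is a bounded linear operator on $\Leb$ and $f\in\CDR$ with $f'\in L^\infty$ induces a globally Lipschitz Nemytskii operator on $\Leb$, the Cauchy problem \eqref{disersion_evolution}-\eqref{disersion_initial_condition} is an ordinary differential equation in the Banach space $\Leb$ with Lipschitz right-hand side. Hence the solution from Theorem \ref{thm:existence_uniqueness} is in fact $C^1\big([0,\infty),\Leb\big)$ and satisfies $u_t = \Leo u + f(u)$ in $\Leb$. Setting $\phi(s)=\tfrac12((s-u_2)^+)^2$, we have $\phi\in\CDR$ with the globally Lipschitz derivative $\phi'(s)=(s-u_2)^+$, so a difference-quotient argument combined with the $C^1$-in-time regularity of $u$ and the continuity of the map $u\mapsto(u-u_2)^+$ on $\Leb$ yields that $\Phi$ is differentiable with
\[
\Phi'(t) = \int_\Omega (u-u_2)^+\,\big(\Leo u + f(u)\big)\,dx =: I_1 + I_2 .
\]

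The reaction term is controlled by the hypothesis $f(u_2)=0$: on $\{u>u_2\}$ the mean value theorem gives $f(u)=f(u)-f(u_2)=f'(\xi)(u-u_2)\le\|f'\|_\infty(u-u_2)^+$, whence $I_2\le\|f'\|_\infty\int_\Omega((u-u_2)^+)^2\,dx = 2\|f'\|_\infty\,\Phi$. The nonlocal term is the heart of the argument. Writing $v=u-u_2$, using that $J_\varepsilon$ is even so that $J_\varepsilon(x-y)=J_\varepsilon(y-x)$, and symmetrizing the double integral in $x\leftrightarrow y$, I obtain
\[
2 I_1 = -\int_\Omega\int_\Omega J_\varepsilon(x-y)\,\big(v(x)-v(y)\big)\big(v^+(x)-v^+(y)\big)\,dy\,dx .
\]
Since $s\mapsto s^+$ is non-decreasing, the product $(v(x)-v(y))(v^+(x)-v^+(y))$ is pointwise non-negative, and $J_\varepsilon\ge 0$, so $I_1\le 0$. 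Combining the two bounds gives $\Phi'\le 2\|f'\|_\infty\,\Phi$ with $\Phi(0)=0$, hence $\Phi\equiv 0$; the analogous computation for $\Psi$ (now using monotonicity of $s\mapsto s^-$ and $f(u_1)=0$) yields $\Psi\equiv 0$.

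I expect the only genuine obstacle to be the justification of differentiating the non-smooth truncated energy $\Phi$ when $u$ is a priori merely an $\Leb$-valued solution. This is resolved precisely by first promoting the solution to $C^1\big([0,\infty),\Leb\big)$, exploiting the boundedness of $\Leo$, and then invoking the Lipschitz character of $\phi'$ so that the chain rule survives the composition. Everything else is clean: the reaction estimate follows from $f(u_2)=0$, and the crucial sign $I_1\le 0$ is exactly where the nonlocal structure does the work, combining the symmetry of $J_\varepsilon$ with the monotonicity of the positive part.
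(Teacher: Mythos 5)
Your proof is correct, but it takes a genuinely different route from the paper. The paper proceeds in two stages: for continuous initial data it differentiates the spatial extrema $\mu(t)=\max_x u(x,t)$ and $\nu(t)=\min_x u(x,t)$ (a Rademacher plus mean-value-theorem lemma shows $\mu'(t)=u_t(\xi(t),t)$ a.e.), observes that $\Leo u\leq 0$ at a maximum point and $\Leo u\geq 0$ at a minimum point, deduces the differential inequalities $\mu'\leq f(\mu)$ and $\nu'\geq f(\nu)$, and closes with an ODE comparison principle; it then transfers the result to general $u_0\in \Leb$ by approximating with continuous data, invoking the continuous-dependence estimate of Theorem \ref{thm:existence_uniqueness}\ref{thm:eu_ii}, and extracting an a.e.-convergent subsequence. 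Your Stampacchia truncation argument instead works directly in $\Leb$ in a single pass: the $C^1([0,\infty),\Leb)$ upgrade is legitimate (the integral equation \eqref{ala_duhamel_formula} has a continuous $\Leb$-valued integrand, so the fundamental theorem of calculus applies, and uniqueness identifies this solution with the one in the theorem), the chain rule for $\Phi$ is justified exactly as you say since $\phi'$ is globally Lipschitz, and the symmetrization giving $2I_1=-\iint J_\varepsilon(x-y)\left(v(x)-v(y)\right)\left(v^+(x)-v^+(y)\right)\leq 0$ is valid by Fubini (bounded kernel, bounded domain). This buys you a shorter, self-contained proof with no density step, no spatial continuity of $u$, and no external comparison principle. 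One structural difference worth noting: your sign for $I_1$ uses the evenness \ref{J2} of the kernel essentially, whereas the paper's pointwise maximum argument needs only $J_\varepsilon\geq 0$ and would survive for non-symmetric kernels; conversely, the paper's route yields the sharper pointwise trapping $\nu'\geq f(\nu)$, $\mu'\leq f(\mu)$ (so solutions are squeezed by ODE trajectories, giving, e.g., shrinking invariant regions), while your energy method delivers only membership in $\Gamma$ --- which is all the theorem asks.
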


And finally we derive a criterion for the spatially homogeneous asymptotics.

\begin{thm}[Convergence to the mean]
    Let the assumptions of the Theorem \ref{thm:invariant_region} hold true.
    Let $u \in C\left([0, \infty), \Leb\right)$ be a solution of the Cauchy problem \eqref{disersion_evolution} - \eqref{disersion_initial_condition} with an initial datum $u_0 \in L^2\left(\Omega\right)$. Denote the mean mass
    \begin{equation*}
        u_\Omega(t) = \frac{1}{|\Omega|}\int_\Omega u(x,t)\:dx.
    \end{equation*}There are positive numbers $a_1 > 0, a_2 >0$ and a critical value given by formula
    \begin{equation}
        \sigma = 2(\beta_1 + \max_{u \in \Gamma}|f'(u)|),
        \label{critical_value}
    \end{equation}
    where $\beta_1$ is largest non-trivial eigenvalue. For these values the following statements hold true for all $t > 0$.
    \begin{enumerate}[label=\textnormal{(\arabic*)}]
        \item [\mylabel{thm:conv_i}{(i)}] The difference between the solution $u$ and a mean mass $u_\Omega$ is estimated by
        \begin{equation}
            \| u(t) - u_{\Omega}(t) \|_{2} \leq a_1 e^{\sigma t}.
        \end{equation}
        \item [\mylabel{thm:conv_ii}{(ii)}] The mean mass $u_\Omega$ satisfies the following ordinary differential equation
        \begin{equation}
            \begin{cases}
                u_\Omega'(t) &= f(u_\Omega(t)) + O(e^{\sigma t}), \\
                u_\Omega(0) &= \frac{1}{|\Omega|}\int_\Omega u_0(x)\: dx.
            \end{cases}
        \end{equation}
        \item [\mylabel{thm:conv_iii}{(iii)}] The Dirichlet form associated with $\Leo$ is estimated by
        \begin{equation}
            |\left\langle \Leo u(t), u(t) \right\rangle| \leq a_2 e^{2\sigma t}.
        \end{equation}
    \end{enumerate}
    \label{thm:convergence_to_mean_mass}
\end{thm}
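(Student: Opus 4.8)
The organising idea is that the spatial mean $u_\Omega(t)$ is exactly the $L^2(\Omega)$-orthogonal projection of $u(t)$ onto the constants, which form $\ker\Leo$; thus the deviation $w(t) := u(t) - u_\Omega(t)$ is zero-mean and lives in the subspace on which $\Leo$ has a spectral gap. I would first record two structural identities. As constants are annihilated by $\Leo$, we have $\Leo u_\Omega \equiv 0$ and hence $\Leo u = \Leo w$; and as $J_\varepsilon$ is symmetric, Fubini gives $\int_\Omega \Leo u\,dx = 0$. Integrating \eqref{disersion_evolution} over $\Omega$ then produces $u_\Omega'(t) = \frac{1}{|\Omega|}\int_\Omega f(u(x,t))\,dx$, the backbone of \ref{thm:conv_ii}, while subtracting this from \eqref{disersion_evolution} shows $w$ solves $w_t = \Leo w + \big(f(u) - \frac{1}{|\Omega|}\int_\Omega f(u)\,dx\big)$ with $\int_\Omega w\,dx = 0$ for every $t$.

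The core is an energy estimate. Differentiating, $\tfrac12\tfrac{d}{dt}\|w\|_2^2 = \langle \Leo w, w\rangle + \big\langle f(u) - \tfrac{1}{|\Omega|}\int_\Omega f(u),\, w\big\rangle$. I would bound the first term using that $\Leo$ is self-adjoint and negative semidefinite — symmetrising yields $\langle \Leo v, v\rangle = -\tfrac12\iint_{\Omega\times\Omega} J_\varepsilon(x-y)(v(x)-v(y))^2\,dy\,dx \le 0$ — so that, $w$ being orthogonal to $\ker\Leo$, the Rayleigh quotient on the complement is controlled by the top of the remaining spectrum: $\langle \Leo w, w\rangle \le \beta_1\|w\|_2^2$. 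For the second term I would use the zero mean of $w$ to replace the constant $\tfrac{1}{|\Omega|}\int_\Omega f(u)$ by the constant $f(u_\Omega)$ (both orthogonal to $w$); here the invariant region of Theorem \ref{thm:invariant_region} is indispensable, since it guarantees $u(x,t), u_\Omega(t) \in \Gamma$, so the mean value theorem with $L := \max_{u\in\Gamma}|f'(u)|$ gives $|f(u)-f(u_\Omega)| \le L|w|$ pointwise and hence $\langle f(u)-f(u_\Omega), w\rangle \le L\|w\|_2^2$. Summing, $\tfrac{d}{dt}\|w\|_2^2 \le 2(\beta_1 + L)\|w\|_2^2 = \sigma\|w\|_2^2$.

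Grönwall's inequality then gives $\|w(t)\|_2^2 \le \|w(0)\|_2^2\,e^{\sigma t}$, which realises the exponential convergence $u(t) \to u_\Omega(t)$ of \ref{thm:conv_i}. Statement \ref{thm:conv_ii} follows by feeding this decay into the identity for $u_\Omega'$: Cauchy–Schwarz gives $|u_\Omega'(t) - f(u_\Omega(t))| \le \tfrac{1}{|\Omega|}\int_\Omega |f(u)-f(u_\Omega)|\,dx \le \tfrac{L}{|\Omega|^{1/2}}\|w(t)\|_2$, which is exponentially small. Statement \ref{thm:conv_iii} is then immediate, since $\langle \Leo u, u\rangle = \langle \Leo w, w\rangle$ (self-adjointness together with $\Leo u_\Omega = 0$) and $\Leo$ is bounded, whence $|\langle \Leo u, u\rangle| \le \|\Leo\|\,\|w(t)\|_2^2$ inherits the exponential decay.

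The main obstacle I anticipate is justifying the spectral-gap inequality $\langle \Leo w, w\rangle \le \beta_1\|w\|_2^2$ on the zero-mean subspace, which has two delicate points. First, one must know $\ker\Leo$ is exactly the constants, so that the zero-mean functions coincide with $(\ker\Leo)^\perp$; the Dirichlet-form identity shows $\langle \Leo v, v\rangle = 0$ forces $v(x)=v(y)$ wherever $J_\varepsilon(x-y)>0$, pinning $v$ to a constant once $\Omega$ is connected. Second, and more delicate under Assumption \ref{assumptions_general_kernel}, $\beta_1$ must dominate the \emph{entire} non-zero spectrum — in particular the continuous part $b_\varepsilon[\overline{\Omega}]$ of Theorem \ref{thm:dispersion_spectrum} — so that $\sup(\sigma(\Leo)\setminus\{0\})$ can be replaced by $\beta_1$; one also uses here that $0$ is isolated in $\sigma(\Leo)$, which holds because $\sup_{\overline\Omega} b_\varepsilon < 0$. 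In the periodic case this is automatic: Remark \ref{rem:dispersion_periodic_spectrum} collapses $b_\varepsilon[\overline{\Omega}]$ to the single point $b_\varepsilon(0)=\beta_\infty$, which by part \ref{thm:2.2.1:ii} of Theorem \ref{thm:dispersion_eigenpairs} is the limit of the $\beta_k$ and lies strictly below $\beta_1$.
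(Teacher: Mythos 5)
Your proposal is correct and follows essentially the same route as the paper's own proof: an energy estimate for $w = u - u_\Omega$, the spectral-gap bound $\langle \Leo w, w\rangle \le \beta_1 \|w\|_2^2$ on zero-mean functions, the mean value theorem with $L = \max_{u\in\Gamma}|f'(u)|$ justified by the invariant region, Gronwall, then \ref{thm:conv_ii} by Cauchy--Schwarz and \ref{thm:conv_iii} via $\langle \Leo u, u\rangle = \langle \Leo w, w\rangle$ and boundedness of $\Leo$ --- and the subtlety you flag about $\beta_1$ dominating the continuous part $b_\varepsilon[\overline{\Omega}]$ of the spectrum is genuine, since the paper silently identifies the Rayleigh-quotient supremum over zero-mean functions with the ``largest non-trivial eigenvalue.'' One bookkeeping remark: your Gronwall step honestly yields $\|w(t)\|_2 \le a_1 e^{\sigma t/2}$ (hence $e^{\sigma t}$ for the quadratic quantities in \ref{thm:conv_iii}), which is exactly what the paper's own energy computation delivers as well, so the theorem's stated exponents $e^{\sigma t}$ and $e^{2\sigma t}$ are off from both proofs by a harmless factor of two that leaves the sign criterion $\sigma < 0$, and thus the convergence conclusion, unchanged.
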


\begin{rem}
    Observe that once the value of $\sigma$ given by formula \eqref{critical_value} is negative then Theorem \ref{thm:convergence_to_mean_mass} yields the convergence to the mean mass in the sense of $L^2$\ norm. What is more, we see by Theorem \ref{thm:dispersion_eigenpairs} that this criterion becomes
    \begin{equation*}
        \max_{u \in \Gamma} |f'(u)| < \varepsilon^{2-m} \frac{|k|^2}{n} + o(\varepsilon^{4-m}).
    \end{equation*}
\end{rem}

\begin{rem}
    The constants $a_1, a_2$ in Theorem \ref{thm:convergence_to_mean_mass} are given explicitly by
    \begin{equation*}
        a_1 = \|u_0 - u_{\Omega}(0)\|_2, \quad a_2 = \|\Leo\|_{L^2 \rightarrow L^2} \|u_0 - u_{\Omega}(0)\|_2^2. \label{thm:mean_mass_convergence}
    \end{equation*}
\end{rem}

Reminder of this section is focused on the proofs of the above theorems.

\subsection{Proof of Theorem \ref{thm:existence_uniqueness}}

Observe that \ref{thm:eu_i} is a consequence of a more general result by Sun \cite{Sun2022}, who constructed a unique solution $u \in C([0, \infty), L^1(\Omega))$. This solution is obtained as a fixed point of the following equation
\begin{equation}
    u(x,t) = u_0(x) + \int_0^t \Leo u(x,s) \:ds + \int_0^t f(u(x,s))\:ds.
    \label{ala_duhamel_formula}
\end{equation}
Obviously $L^2(\Omega) \subset L^1(\Omega)$ for a bounded domain $\Omega$ and hence \ref{thm:eu_i} follows. Denote by $w = u - v$ the difference of solutions $u, v$ to initial value problem \eqref{disersion_evolution}-\eqref{disersion_initial_condition} and proceed for $t \in [0, T]$ as follows
\begin{align*}
    \| w(t) \|_2 &\leq \|w_0\|_2 + \int_0^t \|\Leo w (s)\|_2 + \|f(u(s)) - f(v(s))\|_2 \:ds \\
    &\leq \|w_0\|_2 + \int_0^t\left(\int_\Omega \int_\Omega J_\varepsilon^2(x-y)\left(w(y, s) - w(x, s)\right)^2\:dy\:dx\right)^{1/2}\:ds \\
    &+ \int_0^t\left(\int_\Omega \left(f(u(x,s)) - f(v(x,s))\right)^2\:dx\right)^{1/2}\:ds \\
    &\leq \|w_0\|_2 + \left( \sqrt{2}\|J_\varepsilon\|_\infty |\Omega|^{1/2} + \|f'\|_\infty \right) \int_0^t \|w(s)\|_2 \:ds.
\end{align*}
The Gronwall inequality yields
\begin{equation*}
    \|w(t)\|_2 \leq e^{T \left( \sqrt{2}\|J_\varepsilon\|_\infty |\Omega|^{1/2} + \|f'\|_\infty \right)} \|w_0\|_2.
\end{equation*}

This proves \ref{thm:eu_ii} with $C(T) = e^{T \left( \sqrt{2}\|J_\varepsilon\|_\infty |\Omega|^{1/2} + \|f'\|_\infty \right)}$. Now, let $u_0 \in C(\overline{\Omega})$. Denote by $\mathbb{X} = C\left(\overline{\Omega} \times [0, T_0]\right)$ the Banach space equipped with the norm
\begin{equation*}
    ||| u ||| = \max_{t \in [0, T_0]} \max_{x \in \overline{\Omega}} |u(x,t)|.
\end{equation*}
Now apply the Banach Fixed Point Theorem and the usual procedure. We deduce that there exists unique solution $u \in C\left( \overline{\Omega} \times [0, T] \right)$. But now observe that second and third terms in formula \eqref{ala_duhamel_formula} become continuously differentiable in time, and so in fact $u_t \in C\left( \overline{\Omega} \times [0, \infty) \right)$ which is \ref{thm:eu_iii}. Now the proof of Theorem \ref{thm:existence_uniqueness} is finished.

\subsection{Proof of Theorem \ref{thm:invariant_region}}

Our strategy to show an existence of an invariant region is to differentiate maximum and minimum of the solution to \eqref{disersion_evolution}-\eqref{disersion_initial_condition}. The first step is to solve the problem when $u_0 \in C(\overline{\Omega})$. Indeed we have

\begin{lem}
    For $T > 0$ and bounded domain $\Omega \subset \mathbb{R}^n$ let $u \in C\left(\overline{\Omega}, [0, T]\right)$ such that $u_t \in C\left(\overline{\Omega}, [0, T]\right)$. Denote the functions $\xi: [0, T] \rightarrow \overline{\Omega}$, $\zeta: [0, T] \rightarrow \overline{\Omega}$ and $\mu: [0, T] \rightarrow \mathbb{R}$, $\nu: [0, T] \rightarrow \mathbb{R}$ given by the following formulas
    \begin{equation}
        \mu(t) := u(\xi(t), t) = \max_{x \in \Omega} u(x,t), \quad \nu(t) := u(\zeta(t), t) = \min_{x \in \Omega} u(x,t)
        \label{mu_nu_xi_zeta_formula}
    \end{equation}
    Then both $\mu$ and $\nu$ are differentiable a.e. and their derivatives are given by
    \begin{equation}
        \mu'(t) = u_t(\xi(t), t), \quad \nu'(t) = u_t(\zeta(t), t).
        \label{mu_nu_derivatives_formula}
    \end{equation} 
    \label{lem:mu_nu_differentiability}
\end{lem}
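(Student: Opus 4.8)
The plan is to show that the maximum function $\mu(t) = \max_{x \in \overline{\Omega}} u(x,t)$ is Lipschitz (hence differentiable almost everywhere by Rademacher's theorem in one dimension), and then to identify its derivative at points of differentiability using the envelope structure. First I would observe that since $u \in C(\overline{\Omega} \times [0,T])$ and $\overline{\Omega}$ is compact, the maximum over $x$ is attained at some point $\xi(t)$, so $\mu$ is well defined. To prove Lipschitz continuity of $\mu$, I would take $t, s \in [0,T]$ and estimate
\begin{align*}
    \mu(t) - \mu(s) &= u(\xi(t), t) - u(\xi(s), s) \\
    &\leq u(\xi(t), t) - u(\xi(t), s) \leq \max_{x \in \overline{\Omega}} |u(x,t) - u(x,s)|,
\end{align*}
where the first inequality uses that $\xi(s)$ is a maximizer at time $s$, so $u(\xi(s),s) \geq u(\xi(t),s)$. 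A symmetric argument swapping $t$ and $s$ gives the reverse bound, yielding $|\mu(t) - \mu(s)| \leq \max_{x} |u(x,t) - u(x,s)|$. Since $u_t \in C(\overline{\Omega} \times [0,T])$ is bounded on the compact set, the mean value theorem in $t$ gives $|u(x,t) - u(x,s)| \leq \|u_t\|_\infty |t - s|$, so $\mu$ is Lipschitz with constant $\|u_t\|_\infty$. The same reasoning applies to $\nu$ using the minimum.

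Having established that $\mu$ is differentiable a.e., the main step is to compute its derivative. At a point $t_0$ where $\mu'(t_0)$ exists, I would use the one-sided difference quotients. For the right derivative, using $\mu(t_0 + h) \geq u(\xi(t_0), t_0 + h)$ and $\mu(t_0) = u(\xi(t_0), t_0)$, we get
\begin{equation*}
    \frac{\mu(t_0 + h) - \mu(t_0)}{h} \geq \frac{u(\xi(t_0), t_0 + h) - u(\xi(t_0), t_0)}{h} \xrightarrow{h \to 0^+} u_t(\xi(t_0), t_0).
\end{equation*}
For the left derivative, using $\mu(t_0 - h) \geq u(\xi(t_0), t_0 - h)$ with $h > 0$ yields the reverse inequality $\frac{\mu(t_0) - \mu(t_0 - h)}{h} \leq \frac{u(\xi(t_0),t_0) - u(\xi(t_0), t_0 - h)}{h} \to u_t(\xi(t_0), t_0)$. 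Combining the one-sided bounds at a point of differentiability forces $\mu'(t_0) = u_t(\xi(t_0), t_0)$, which is formula \eqref{mu_nu_derivatives_formula}. The argument for $\nu$ is identical after reversing inequalities (or applying the maximum result to $-u$).

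The main obstacle is the potential non-uniqueness and lack of continuity of the maximizer $\xi(t)$: the set of maximizers may jump, so $\xi$ need not be continuous or even measurable without care. The key insight that circumvents this is that the envelope derivative formula holds using \emph{any} selection of maximizer at the specific point $t_0$, because the difference-quotient argument above only ever compares $\mu$ against the single fixed spatial point $\xi(t_0)$ and never requires $\xi$ to vary smoothly. Thus the derivative is intrinsically well defined at points where $\mu$ is differentiable, regardless of how $\xi$ is chosen; this is the Danskin-type envelope argument, and its clean form here is precisely what the Lipschitz bound and one-sided estimates deliver.
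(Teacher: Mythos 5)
Your proof is correct, and its skeleton — the Lipschitz bound via comparison at a common maximizer, a.e.\ differentiability by Rademacher, then squeezing the difference quotient — is the same as the paper's. The one genuine difference is in how the derivative is identified, and your version is the more careful of the two. The paper bounds the forward quotient from above by comparing with the \emph{moving} maximizer, $\frac{\mu(t+h)-\mu(t)}{h} \leq \frac{u(\xi(t+h),t+h)-u(\xi(t+h),t)}{h} = u_t(\xi(t+h), c(t,h))$, and then passes to the limit to get $\mu'(t) \leq u_t(\xi(t),t)$; this limit tacitly assumes $\xi(t+h) \to \xi(t)$, which can fail when maximizers are non-unique — a limit point of $\xi(t+h)$ is only guaranteed to be \emph{some} maximizer at time $t$, not the selected one, so the paper's step needs an extra subsequence-and-selection remark to be airtight. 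You instead anchor \emph{both} one-sided quotients at the single fixed point $\xi(t_0)$: the right quotient with $\mu(t_0+h) \geq u(\xi(t_0),t_0+h)$ gives $\mu'(t_0) \geq u_t(\xi(t_0),t_0)$, and the left quotient with $\mu(t_0-h) \geq u(\xi(t_0),t_0-h)$ gives $\mu'(t_0) \leq u_t(\xi(t_0),t_0)$, so at every point where $\mu'(t_0)$ exists the formula holds for \emph{any} selection of maximizer, with no continuity or measurability demanded of $\xi$ — your closing observation about this is exactly right, and it even shows as a byproduct that $u_t(\cdot,t_0)$ takes the same value at all maximizers at such times. The trade-off: the paper's forward-only argument is marginally shorter, while your Danskin-type anchoring is fully rigorous as written; the only cosmetic caveat is that the left-quotient argument lives at interior times $t_0 \in (0,T)$, which is harmless for an a.e.\ statement.
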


\begin{proof}
    Let $t > 0, s > 0$. Assume that $\mu(t) > \mu(s)$. formula \eqref{mu_nu_xi_zeta_formula} yields
    \begin{equation*}
        0 < \mu(t) - \mu(s) \leq u(\xi(t), t) - u(\xi(t), s) \leq \max_{(x,t) \in \overline{\Omega} \times [0, T]}|u_t(x,t)||t-s|
    \end{equation*}
    which yields Lipschitz continouity of $\mu$. By the Radamacher theorem \cite{Evans} we conclude that $\mu$ is differentiable almost everywhere. Now, we estimate the difference quotient as follows
    \begin{align*}
        \frac{\mu(t+h) - \mu(t)}{h} 
        &= \frac{u(\xi(t+h),t+h) - u(\xi(t), t)}{h} \\
        &\leq \frac{u(\xi(t+h),t+h) - u(\xi(t+h), t)}{h} \\
        &= u_t(\xi(t+h), c(t, h)),
    \end{align*}
    where by the Mean Value Theorem $c(t, h) \in [t-h, t+h]$. Passing to the limit as $h \rightarrow 0$ we get
    \begin{equation}
        \mu'(t) \leq u_t(\xi(t), t).
        \label{upper_bound_mu}
    \end{equation}
    The reverse estimate can be written down as follows
    \begin{align*}
        \frac{\mu(t+h) - \mu(t)}{h}
        &\geq \frac{u(\xi(t),t+h) - u(\xi(t), t)}{h} \\
        &= u_t(\xi(t), c(t, h)).
    \end{align*}
    Similarly as before, we pass to the limit as $h \rightarrow 0$ and obtain
    \begin{equation}
        \mu'(t) \geq u_t(\xi(t), t).
        \label{lower_bound_mu}
    \end{equation}
    Combining inequalities \eqref{upper_bound_mu} and \eqref{lower_bound_mu} we get formula \eqref{mu_nu_derivatives_formula} for $\mu'$. The proof for $\nu'$ is analogous. That completes the proof of the Lemma \ref{lem:mu_nu_differentiability}.
\end{proof}

Next we show that $\mu$ and $\nu$ are bounded by the minimum and maximum of $u_0$.

\begin{lem}
    Let the assumptions of the Theorem \ref{thm:invariant_region} hold true. Let $u \in C\left(\overline{\Omega}, [0, T]\right)$ be a solution to the Cauchy problem \eqref{disersion_evolution}-\eqref{disersion_initial_condition} with an initial datum $u_0 \in C(\overline{\Omega})$. Denote the functions $\xi, \zeta, \mu, \nu$ as in the Lemma \ref{lem:invariant_region_cont}. Then for all $t \geq 0$ the following estimates hold true
    \begin{equation*}
        u_1 \leq \nu(t) \leq \mu(t) \leq u_2
    \end{equation*}
    \label{lem:invariant_region_cont}
\end{lem}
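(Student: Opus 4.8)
The plan is to differentiate the extremal functions $\mu$ and $\nu$ and exploit the order-preserving structure of the non-local operator $\Leo$ at extremum points, and then to compare $\mu$ and $\nu$ with the stationary solutions $u_1, u_2$ of the scalar ODE $w' = f(w)$ by means of a Gr\"onwall argument. The key point is that the constants $u_1, u_2$ are equilibria of this ODE precisely because $f(u_1) = f(u_2) = 0$.

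First I would invoke Lemma \ref{lem:mu_nu_differentiability}, which gives that $\mu, \nu$ are Lipschitz, differentiable almost everywhere, with $\mu'(t) = u_t(\xi(t), t)$ and $\nu'(t) = u_t(\zeta(t), t)$. The crucial observation is the sign of the dispersion term at an extremum: since $u(\xi(t), t) = \mu(t) = \max_{y} u(y,t)$ and $J_\varepsilon \geq 0$, the integrand $J_\varepsilon(\xi(t) - y)(u(y,t) - \mu(t))$ is non-positive, so
\[
\Leo u(\xi(t), t) = \int_\Omega J_\varepsilon(\xi(t) - y)(u(y,t) - \mu(t))\, dy \leq 0,
\]
and symmetrically $\Leo u(\zeta(t), t) \geq 0$ at the minimum. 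Feeding these into the evolution equation \eqref{disersion_evolution} evaluated at $(\xi(t), t)$ and $(\zeta(t), t)$ yields the differential inequalities
\[
\mu'(t) \leq f(\mu(t)), \qquad \nu'(t) \geq f(\nu(t)) \quad \text{a.e. } t > 0.
\]

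Next I would run the comparison with the equilibria. To prove $\mu(t) \leq u_2$, set $g(t) = (\mu(t) - u_2)^+$, which is Lipschitz, hence absolutely continuous, with $g(0) = 0$ because $u_0(x) \leq u_2$ forces $\mu(0) \leq u_2$. On the set where $\mu(t) > u_2$ one has $g'(t) = \mu'(t) \leq f(\mu(t)) - f(u_2)$, using $f(u_2) = 0$; the Lipschitz bound $L := \|f'\|_\infty$ then gives $g'(t) \leq L(\mu(t) - u_2) = L\, g(t)$, while $g'(t) = 0$ almost everywhere on $\{\mu \leq u_2\}$. Hence $g'(t) \leq L\, g(t)$ a.e., and since $e^{-Lt} g(t)$ is absolutely continuous with non-positive derivative, it is non-increasing, forcing $g \equiv 0$, i.e. $\mu(t) \leq u_2$ for all $t$. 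The bound $\nu(t) \geq u_1$ is obtained identically from $h(t) = (u_1 - \nu(t))^+$ using $\nu'(t) \geq f(\nu(t))$ and $f(u_1) = 0$. The remaining inequality $\nu(t) \leq \mu(t)$ is immediate from the definitions of minimum and maximum.

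The main obstacle is making the comparison rigorous given only almost-everywhere differentiability: the argument hinges on passing from the pointwise differential inequality to the integral Gr\"onwall estimate, which is legitimate precisely because $\mu, \nu$ — and hence the truncations $g, h$ — are Lipschitz and therefore absolutely continuous. Care is also needed at the level set $\{\mu = u_2\}$, where one uses that the a.e.\ derivative of a Lipschitz function vanishes on any set where the function is constant, so that the inequality $g' \leq L\, g$ holds throughout $[0, T]$ and not merely where $\mu > u_2$.
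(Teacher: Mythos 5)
Your proof is correct and follows the paper's own route through its main steps: the sign of $\Leo u$ at the extremum points $\xi(t)$, $\zeta(t)$ combined with Lemma~\ref{lem:mu_nu_differentiability} gives the differential inequalities $\mu'(t) \leq f(\mu(t))$ and $\nu'(t) \geq f(\nu(t))$ almost everywhere, exactly as in the paper. The only divergence is the final step: the paper concludes by citing the classical comparison principle for differential inequalities \cite{Petrovitch1901}, whereas you prove the comparison from scratch via the truncations $g = (\mu - u_2)^+$ and $h = (u_1 - \nu)^+$ and a Gr\"onwall argument. This is a worthwhile refinement rather than a genuinely different approach: since $\mu$ and $\nu$ are only Lipschitz and differentiable a.e., a textbook comparison principle for $C^1$ solutions does not apply verbatim, and your argument --- using absolute continuity to integrate $g' \leq L g$ with $L = \|f'\|_\infty$ (available since the standing assumptions require $f' \in L^\infty$), together with the fact that the derivative of a Lipschitz function vanishes a.e. on the level set $\{g = 0\}$ --- supplies precisely the regularity bookkeeping that the paper delegates to the citation. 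One small point worth making explicit: the hypothesis of Theorem~\ref{thm:invariant_region} gives $u_0(x) \in \Gamma$ only a.e., so you should note that continuity of $u_0$ upgrades this to $u_0(x) \in \Gamma$ for every $x \in \overline{\Omega}$, which is what justifies $g(0) = h(0) = 0$.
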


\begin{proof}
    Recall the definition of the dispersion operator $\Leo$ as in formula \eqref{dispersion_operator}. In maximum point $\xi$ we have
    \begin{equation*}
        \Leo u(\xi(t), t) = \int_\Omega J_\varepsilon(x-y) \left(u(y, t) - u(\xi(t), t)\right)\:dy \leq 0.
    \end{equation*}
    Moreover in minimum point $\zeta$ we have
    \begin{equation*}
        \Leo u(\zeta(t), t) = \int_\Omega J_\varepsilon(x-y) \left(u(y, t) - u(\zeta(t), t)\right)\:dy \geq 0.
    \end{equation*}
    By Lemma \ref{lem:mu_nu_differentiability} both $\mu$ and $\nu$ are differentiable almost everywhere in time $t > 0$. Formula \eqref{mu_nu_derivatives_formula} yields
    $$\mu'(t) = u_t(\xi(t), t) = \Leo u(\xi(t), t) + f(\mu(t)) \leq f(\mu(t)).$$
    Similarly we have
    $$\nu'(t) = u_t(\zeta(t), t) = \Leo u(\zeta(t), t) + f(\nu(t)) \geq f(\nu(t)).$$
    The proof is completed by the usual comparison principle for the differential inequalities \cite{Petrovitch1901}.
\end{proof}
 
In order to proceed with the proof of Theorem \ref{thm:invariant_region} observe that every $u_0 \in L^2(\Omega)$ can be approximated by the sequence $u^n_0 \in C(\overline{\Omega})$. Now let the functions $u \in C([0, T], \Leb)$ and $u^n \in C(\overline{\Omega}, [0, T])$ be the solutions of the Cauchy problem \eqref{disersion_evolution}-\eqref{disersion_initial_condition} equipped respectively with the initial data $u_0, u^n_0$. Assume that $u_0(x) \in \Gamma$ a.e., then $u_0^n \in \Gamma$ a.e.. By the Theorem \ref{thm:existence_uniqueness} \ref{thm:eu_ii} we have the following estimate

\begin{equation*}
    \|u - u^n\|_2 \leq C(T)\|u_0 - u_0^n\|_2.
\end{equation*}

Passing to the limit as $n \rightarrow \infty$ we get that $u^n \rightarrow u$ in the sense of $L^2$ norm. But now one can take the subsequence $u^{n_k}$ such that $u^{n_k} \rightarrow u$ a.e. as $k \rightarrow \infty$. By the Lemma \ref{lem:invariant_region_cont} $u^{n_k}(x,t) \in \Gamma$ a.e. for all $t \in [0, T]$. Hence $u(x,t) \in \Gamma$ a.e. for all $t \in [0, T]$. Thus the proof of Theorem \ref{thm:invariant_region} is completed.

\subsection{Proof of Theorem \ref{thm:convergence_to_mean_mass}}

We introduce the notion of energy $E_v$ of the distance function $v$ by the formula

\begin{equation*}
    E_v(t) = \frac{1}{2}\int_\Omega |v(x,t)|^2 \:dx, \quad v(x,t) = u(x,t) - u_\Omega(t),
\end{equation*}

where $u$ is a solution to Cauchy problem \eqref{disersion_evolution}-\eqref{disersion_initial_condition} and $u_\Omega(t)$ is the mean mass.

We differentiate the energy to get

\begin{align*}
    E_v'(t) &= \int_\Omega v_tv \:dx \\
    &= \int_\Omega \left(u_t - u_\Omega'\right)v\:dx \\
    &= \int_\Omega \left(\Leo u + f(u) - \frac{1}{|\Omega|} \int_\Omega f(u)\:dy\right)v\:dx \\
    &= \left\langle \Leo u, v\right\rangle + \int_\Omega \left( f(u) - f(u_\Omega) - \frac{1}{|\Omega|} \int_\Omega \left(f(u) - f(u_\Omega)\right)\:dy\:\right) v \: dx \\
    &\leq \left\langle \Leo v, v\right\rangle + \max_{u \in \Gamma}|f'(u)| E_v(t) - \frac{1}{|\Omega|} \int_\Omega \left(f(u) - f(u_\Omega)\right)\:dy \int_\Omega v\:dx\\
    &= \langle \Leo v, v\rangle + \max_{u \in \Gamma}|f'(u)| E_v(t)
\end{align*}

Now we divide both sides by $E_v(t)$ and obtain the final inequality

\begin{align*}
    \frac{E_v'(t)}{E_v(t)} &\leq 2\left(\sup_{v \in L^2(\Omega), \int_\Omega v = 0}\frac{\langle \Leo v, v \rangle}{\|v\|_2^2} + \max_{u \in \Gamma}|f'(u)|\right) \\
    &= 2\left(\beta_1 + \max_{u \in \Gamma}|f'(u)|\right) =: \sigma.
\end{align*}

Solving this differential inequality we get that

\begin{equation*}
    \|u(t) - u_\Omega(t)\|_2 \leq \|u_0 - u_{\Omega}(0)\|_2 e^{\sigma t} =: a_1 e^{\sigma t}.
\end{equation*}

Next, we compute the derivative of the mean mass $u_\Omega$, apply the Mean Value Theorem and Cauchy-Schwarz inequality to obtain

\begin{align*}
    u_\Omega'(t) 
    &= \frac{1}{|\Omega|} \int_\Omega f(u)\:dx \\
    &\leq f(u_\Omega) + \frac{1}{|\Omega|} \int_\Omega \left|f(u) - f(u_\Omega)\right|\:dx \\
    &\leq f(u_\Omega) + \frac{\max_{u \in \Gamma} |f'(u)|}{|\Omega|} \int_\Omega |u - u_\Omega|\:dx \\
    &\leq f(u_\Omega) + \max_{u \in \Gamma} |f'(u)| \|v\|_2^2
\end{align*}

Finally, using claim \ref{thm:conv_i} we get that mean mass satisfies the following differential inequality

\begin{align*}
    \begin{cases}
        u_\Omega'(t) = f(u_\Omega(t)) + O(e^{\sigma t}) \\
        u_\Omega(0) = \frac{1}{|\Omega|}\int_\Omega u_0(x)\:dx
    \end{cases}    
\end{align*}

Lastly, it suffices to use \ref{thm:conv_i} once more to get

\begin{align*}
    \left| \langle \Leo u, u \rangle \right| &= \left| \langle \Leo v, u \rangle \right|\\
    &= \left| \langle \Leo v, v \rangle + u_\Omega(t) \int_\Omega \Leo v \:dx \right| \\
    &= \left|\langle \Leo v, v \rangle\right| \\
    &\leq \|\Leo v\|_2 \|v\|_2 \\
    &\leq \|\Leo \| \|v\|_2^2 \\
    &\leq \|\Leo\|u_0 - u_{0, \Omega}\|_2^2 e^{2\sigma t} = a_2 e^{2\sigma t},
\end{align*}

The proof of Theorem \ref{thm:mean_mass_convergence} is now completed.

\section{Spatially heterogeneous steady states} \label{sec:hetero_asymptotics}

\subsection{Main results of the section}

In this section we construct non-constant solution $U$ of steady equation \eqref{dispersion_equilibria}. The first class of continuous stationary states is given by the following theorem.

\begin{thm}[Continuous equilibrium]
    Let $J_\varepsilon$ and $\mathbb{T}^n$ satisfy Assumption \ref{assumptions_periodic_kernel}. Assume that $f \in C^1(\mathbb{R})$ is such that there exist $u_k \in \mathbb{R}$ satisfying $f(u_k) = 0$, $f'(u_k) = \beta_k$, where $\beta_k$ is a $k$-th eigenvalue of dispersion operator $\Leo$. Consider the equation
    \begin{equation}
        \Leo U + f(U) = \lambda U.
        \label{bifurcation_equation}
    \end{equation}
    Then for $\ell > \frac{n}{2}$ there exists a sequence of non-constant solutions $\Seq{U}{i} \subset W^{\ell,2}(\mathbb{T}^n)$ and corresponding numbers $\Seq{\lambda}{i} \subset \mathbb{R}$ to equation \eqref{bifurcation_equation} such that
    \begin{equation*}
        (U_i, \lambda_i) \xrightarrow{i \rightarrow \infty} (u_k, 0).
    \end{equation*}
    \label{thm:continous_ss_existence}
\end{thm}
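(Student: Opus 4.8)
The plan is to obtain the non-constant steady states as a local bifurcation branch emanating from the constant state $u_k$ at the parameter value $\lambda = 0$, using the explicit eigenpairs of Theorem~\ref{thm:dispersion_eigenpairs} to locate the kernel of the linearization. I would work in $X = W^{\ell,2}(\mathbb{T}^n)$ with $\ell > \frac{n}{2}$, whose two decisive features are that it is a Banach algebra and embeds continuously into $C(\mathbb{T}^n)$; this makes the Nemytskii operator $U \mapsto f(U)$ continuous on $X$ and guarantees that the convolution part of $\Leo$ maps $X$ into $X$. On the torus $b_\varepsilon \equiv -\hat{J}_\varepsilon(0)$ is constant by Remark~\ref{rem:dispersion_periodic_spectrum}, so $\Leo = K - \hat{J}_\varepsilon(0)\,I$ with $K U = J_\varepsilon * U$ compact, and $\Leo$ is self-adjoint on $L^2(\mathbb{T}^n)$ by symmetry of $J_\varepsilon$.

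First I would fix a genuine branch of trivial solutions. Every constant $c$ solves \eqref{bifurcation_equation} with $\lambda = f(c)/c$, and since $f'(u_k)=\beta_k \neq 0$ the implicit function theorem produces a smooth curve $c(\lambda)$ of constant solutions with $c(0)=u_k$. Writing $U = c(\lambda)+w$ and using $\Leo c(\lambda)=0$ together with $f(c(\lambda))=\lambda c(\lambda)$, the steady equation becomes
\[
F(\lambda, w) := \Leo w + f\!\left(c(\lambda)+w\right) - f\!\left(c(\lambda)\right) - \lambda w = 0, \qquad F(\lambda, 0) \equiv 0 .
\]
Linearising gives $D_w F(\lambda, 0)h = \Leo h + f'(c(\lambda))h - \lambda h$, and at $\lambda = 0$ this is $\Leo + f'(u_k) I$. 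By the prescribed spectral relation $f'(u_k)=\beta_k$ and the explicit eigenfunctions of Theorem~\ref{thm:dispersion_eigenpairs}, this operator is singular precisely in the direction of the eigenmodes, so its kernel is spanned by $\varphi_k^1=\cos(k\cdot x)$ and $\varphi_k^2=\sin(k\cdot x)$. Since $D_w F(0,0)$ is a compact perturbation of a nonzero multiple of the identity it is Fredholm of index zero, and self-adjointness yields $\mathrm{Range}\,D_w F(0,0) = \ker\, D_w F(0,0)^{\perp}$.

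The main obstacle is that this kernel is at least two-dimensional, and on $\mathbb{T}^n$ it may be larger when several lattice points realise the eigenvalue $\beta_k$, so the Crandall--Rabinowitz theorem does not apply directly. I would resolve this by restricting $F$ to the closed subspace $X_{\mathrm{even}} \subset X$ of even functions ($U(-x)=U(x)$): because $J_\varepsilon$ is even, $\Leo$ preserves $X_{\mathrm{even}}$, the Nemytskii operator does too, and $\sin(k\cdot x)$ is eliminated, reducing the kernel to $\mathrm{span}\{\cos(k\cdot x)\}$ once the wave direction is fixed and the structure of $\beta_k$ is used to ensure the eigenvalue is simple within this class. On $X_{\mathrm{even}}$ the Crandall--Rabinowitz hypotheses then hold: one-dimensional kernel, codimension-one range, and transversality, the last reducing via self-adjointness to a scalar non-degeneracy condition asserting that $D_\lambda D_w F(0,0)\varphi_k^1$ pairs non-trivially against the one-dimensional cokernel spanned by $\varphi_k^1$.

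Applying Crandall--Rabinowitz produces a $C^1$ curve $s \mapsto (\lambda(s), w(s))$ with $w(s) = s\cos(k\cdot x) + o(s)$ and $(\lambda(0), w(0)) = (0,0)$; setting $U(s) = c(\lambda(s)) + w(s)$ and choosing any sequence $s_i \to 0$ gives $U_i := U(s_i)$, $\lambda_i := \lambda(s_i)$ with $(U_i, \lambda_i) \to (u_k, 0)$. The solutions are non-constant because the leading term $s\cos(k\cdot x)$ is non-constant, so $U_i$ is non-constant for small $s_i \neq 0$; membership in $W^{\ell,2}$ follows by bootstrapping through the smoothing convolution $J_\varepsilon * U$. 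The steps I expect to be most delicate are the reduction of the relevant eigenvalue to a simple one inside a symmetry class and the verification of the transversality/non-degeneracy condition; a secondary technical point is ensuring the Nemytskii operator is sufficiently regular on $W^{\ell,2}$ given only $f \in C^1$, which may force either $\ell$ close to $\frac{n}{2}$ or mild additional smoothness of $f$.
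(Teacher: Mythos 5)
Your overall strategy---local bifurcation from the constant branch at $\lambda=0$ using the explicit eigenpairs---is sound in spirit, and your construction of the trivial branch $c(\lambda)$ via the implicit function theorem is actually more careful than the paper's own reduction (the paper's shift to $f(0)=0$ quietly discards the term $\lambda u_k$ when $u_k\neq 0$). However, the route through Crandall--Rabinowitz has two genuine gaps. First, the symmetry reduction does not in general make the eigenvalue simple. Restricting to even functions removes $\sin(k\cdot x)$, but any other lattice vector $k'$ with $\beta_{k'}=\beta_k$ contributes the \emph{even} function $\cos(k'\cdot x)$ to the kernel: for a radial $J$ (whose periodization keeps the lattice symmetries) one has $\beta_{k'}=\beta_k$ for every $k'$ in the orbit of $k$, e.g.\ $k=(1,2)$ and $k'=(2,1)$ in $n=2$, and the even class retains all of these. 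Fixing the wave direction, i.e.\ restricting to functions of $k\cdot x$ (a subspace that is indeed invariant under $\Leo$ and under composition with $f$), still leaves the modes $\cos(m\,k\cdot x)$ for $m\geq 2$, and nothing in Assumption \ref{assumptions_periodic_kernel} prevents $\beta_{mk}=\beta_k$, since distinct Fourier coefficients of $J_\varepsilon$ may coincide. Second, the transversality hypothesis requires differentiating $D_wF(\lambda,0)=\Leo+\bigl(f'(c(\lambda))-\lambda\bigr)I$ in $\lambda$, which needs $f\in C^2$ near $u_k$ (the theorem assumes only $f\in C^1$), and even granting that, it reduces via self-adjointness to the scalar condition $f''(u_k)\,u_k/\beta_k\neq 1$---a non-degeneracy assumption nowhere in the hypotheses (it is automatic only when $u_k=0$). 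If it fails, Crandall--Rabinowitz gives nothing.

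The paper sidesteps both obstacles by using a different bifurcation theorem: Rabinowitz's result for \emph{potential} operators (Theorem \ref{thm:rabinowitz_bifurcation}). The stationary equation is the Euler--Lagrange equation of the functional $I$ of Lemma \ref{lem:nonlinear_functional}, with $DI(u)=\Leo u+f(u)$, and for gradient maps an isolated eigenvalue of \emph{finite multiplicity} is already a bifurcation point---no simplicity, no symmetry reduction, and no transversality condition is required; isolatedness of $\beta_k$ comes from Theorem \ref{thm:dispersion_spectrum} and finite multiplicity from compactness of the convolution part of $\Leo$. To salvage your approach you would have to add hypotheses (simplicity of $\beta_k$ within an invariant symmetry class, $f\in C^2$, and the non-degeneracy condition above); otherwise the variational route is the one that actually proves the theorem as stated.
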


Now we derive the class of discontinuous equilibria in the following theorem.

\begin{thm}[Discontinuous equilibrium]
    Consider the integral kernel $J_\varepsilon$ and domain $\Omega$ satisfying either Assumption \ref{assumptions_general_kernel} or \ref{assumptions_periodic_kernel}. Assume that $f \in C^2_b(\mathbb{R})$ is such that there exists $u_1, u_2$ satisfying $f(u_1) = f(u_2) = 0$ and $f'(u_1) \neq 0$, $f'(u_2) \neq 0$. For arbitrary open, non-empty $\Omega_1 \subset \Omega$ and $\Omega_2 \subset \Omega$ such that
    \begin{equation}
        \Omega_1 \cap \Omega_2 = \emptyset \quad \text{ and } \Omega = \left(\overline{\Omega_1 \cup \Omega_2}\right)^o. \label{omega_partition}
    \end{equation}
    Suppose that $\varepsilon > 0$ is sufficiently large. Then the steady problem \eqref{dispersion_equilibria} admits a solution of form $U = \hat{U} + \varphi$. The jump function $\hat{U}$ is defined by formula
    \begin{equation}
        \Tilde{U}(x) = \begin{cases}
            u_1 \quad x \in \Omega_1, \\
            u_2 \quad x \in \Omega_2,
        \end{cases}
        \label{jump_function}
    \end{equation}
    and the perturbation $\varphi \in \overline{B(0, R)} \subset \L^\infty(\Omega)$ for a small radius $R > 0$. If we additionally assume that $f'(u_1) < 0$ and $f'(u_2) < 0$ and $R$ is sufficiently small, then $U$ is linearly, exponentially stable in $L^2(\Omega)$.
    \label{thm:discontinous_ss}
\end{thm}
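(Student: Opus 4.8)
The plan is to treat \eqref{dispersion_equilibria} as a small perturbation of the purely algebraic equation $f(U)=0$, around the jump profile $\hat{U}$ of \eqref{jump_function}. Writing $U = \hat{U} + \varphi$ and using the decomposition \eqref{leo_decomposition}, the steady equation becomes $\Leo(\hat{U} + \varphi) + f(\hat{U} + \varphi) = 0$. Since $\hat{U}$ takes values in the zero set of $f$, we have $f(\hat{U}) = 0$ pointwise, so a first-order Taylor expansion gives $f(\hat{U} + \varphi) = f'(\hat{U})\varphi + N(\varphi)$, where $N(\varphi) := f(\hat{U} + \varphi) - f'(\hat{U})\varphi$ is the quadratic remainder, satisfying the pointwise bound $|N(\varphi)(x)| \le \tfrac12\|f''\|_\infty \varphi(x)^2$ because $f \in C^2_b$. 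The multiplication operator $f'(\hat{U})$ equals $f'(u_1)$ on $\Omega_1$ and $f'(u_2)$ on $\Omega_2$; by the hypothesis $f'(u_1), f'(u_2) \neq 0$ it is boundedly invertible on $\Lei$, with inverse norm $M := \max\{|f'(u_1)|^{-1}, |f'(u_2)|^{-1}\}$. Solving for $\varphi$ converts \eqref{dispersion_equilibria} into the fixed-point problem $\varphi = \Phi(\varphi)$, where
\[ \Phi(\varphi) := -[f'(\hat{U})]^{-1}\big(\Leo \hat{U} + \Leo \varphi + N(\varphi)\big). \]

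Next I would run a contraction argument for $\Phi$ on the ball $\overline{B(0,R)} \subset \Lei$. The decisive quantity is the strength of the nonlocal coupling $\kappa := \|\Leo\|_{\Lei \to \Lei} \le 2\sup_{x}\int_\Omega J_\varepsilon(x-y)\,dy$. In the general framework this mass tends to $0$ as $\varepsilon \to \infty$, since $\int_\Omega J_\varepsilon(x-y)\,dy \le \|J_\varepsilon\|_\infty |\Omega| = C_{norm}\|J\|_\infty |\Omega|\,\varepsilon^{-(m+n)} \to 0$; the analogous smallness in the periodic framework must be extracted from the scaling of $C_{norm}^\varepsilon$. Granting this, the estimates $\|\Phi(\varphi)\|_\infty \le M\big(\kappa\|\hat{U}\|_\infty + \kappa R + \tfrac12\|f''\|_\infty R^2\big)$ and $\|\Phi(\varphi_1) - \Phi(\varphi_2)\|_\infty \le M\big(\kappa + \|f''\|_\infty R\big)\|\varphi_1 - \varphi_2\|_\infty$ follow directly. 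One first fixes $R > 0$ small enough that $M\|f''\|_\infty R < \tfrac12$, and then $\varepsilon$ large enough that $M\kappa < \tfrac12$ and $M\kappa(\|\hat{U}\|_\infty + R) \le \tfrac{R}{2}$. Then $\Phi$ maps $\overline{B(0,R)}$ into itself and is a contraction, so the Banach Fixed Point Theorem yields a unique $\varphi \in \overline{B(0,R)}$, producing the steady state $U = \hat{U} + \varphi$.

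For the stability claim I would linearise the evolution \eqref{disersion_evolution} about $U$: setting $u = U + w$ and discarding higher-order terms produces the bounded, \emph{self-adjoint} operator $\mathcal{A} w := \Leo w + f'(U)w$ on $\Leb$ (self-adjointness of $\Leo$ follows from the kernel symmetry \ref{J2} and reality of $b_\varepsilon$). The Dirichlet form is non-positive, $\langle \Leo w, w\rangle = -\tfrac12\int_\Omega\int_\Omega J_\varepsilon(x-y)(w(y)-w(x))^2\,dy\,dx \le 0$, as already exploited in the proof of Theorem \ref{thm:convergence_to_mean_mass}. Since $f'(U) = f'(\hat{U}) + O(\|f''\|_\infty R)$ and $f'(u_1), f'(u_2) < 0$, for $R$ small the multiplier obeys $f'(U(x)) \le -\delta$ a.e. for some $\delta > 0$. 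Hence $\langle \mathcal{A} w, w\rangle \le -\delta\|w\|_2^2$, and by self-adjointness $\sigma(\mathcal{A}) \subset (-\infty, -\delta]$. Consequently the uniformly continuous semigroup obeys $\|e^{t\mathcal{A}}\|_{L^2 \to L^2} \le e^{-\delta t}$, which is the asserted linear exponential stability in $L^2(\Omega)$.

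The main obstacle is the existence step rather than the stability step. Stability is robust: it needs only $R$ small and $f'(u_i) < 0$, and rests on the generic facts that $\Leo$ is negative semi-definite and $f'(U)$ is a strictly negative multiplier. Existence, by contrast, genuinely requires the coupling $\kappa$ to be small, i.e. "$\varepsilon$ sufficiently large", and the delicate point is to make this smallness \emph{quantitative and uniform}: the bound on $\sup_x\int_\Omega J_\varepsilon(x-y)\,dy$ must be established in each framework — the periodic normalisation $C_{norm}^\varepsilon$ and its behaviour as $\varepsilon \to \infty$ deserving particular attention — and all constants must be uniform over the arbitrary, possibly irregular partition $\Omega_1, \Omega_2$. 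I expect the bookkeeping that decouples the choice of $R$ (small, governed by $\|f''\|_\infty$ and $M$) from the choice of $\varepsilon$ (large, governed by $R$, $M$ and $\|\hat{U}\|_\infty$) to be the part most in need of care.
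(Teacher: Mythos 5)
Your proposal follows essentially the same route as the paper: its Lemma \ref{lem:discontinuos_ss_existence} sets up exactly your fixed-point map (inverting the multiplication operator $f'(\Tilde{U})$, contracting on $\overline{B(0,R)} \subset L^\infty(\Omega)$ with coupling constant $C(\Leo) = 2\sup_{x}\int_\Omega J_\varepsilon(x-y)\,dy = O(\varepsilon^{-m})$, and decoupling the choices of small $R$ and large $\varepsilon$ just as you describe), while its Lemma \ref{lem:discontinuos_ss_stability} proves stability via the same negative-definiteness estimate $\langle \Leo w + f'(U)w, w\rangle \leq \gamma_0 \|w\|_2^2$ with $\gamma_0 < 0$, run as an energy/Gronwall inequality rather than through your equivalent semigroup bound. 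Your flagged concern about extracting the smallness of the coupling from $C_{norm}^\varepsilon$ in the periodic framework as $\varepsilon \to \infty$ is well placed, but note that the paper itself simply asserts the $O(\varepsilon^{-m})$ bound in both frameworks, so on this point your account is if anything more careful than, not divergent from, the paper's proof.
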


The remaining part of this section is dedicated to the proofs of both theorems.

\subsection{Proof of Theorem \ref{thm:continous_ss_existence}}

We start with an abstract equation with linear operator $L$ and nonlinear $H$
\begin{equation}
    L v + H(v) = \lambda v,
    \label{abstract_equation_bifurcation}
\end{equation}
where $H$ is such that $H(v) = o(\|v\|)$ when $\|v\| \rightarrow 0$. Obviously there is a trivial solution $(0, \lambda)$ for all $\lambda \in \mathbb{R}$. If equation \eqref{abstract_equation_bifurcation} has a variational formulation then bifurcation techniques allow us to construct a family of its non-trivial solutions.

\begin{defi}[Bifurcation point]
    We call $(0, \lambda)$ a bifurcation point for equation \eqref{abstract_equation_bifurcation} if every neighbourhood of $(0, \lambda)$ contains a non-zero solution.
    \label{def:bifurcation_point}
\end{defi}

Now, for convenience of the reader we supply this subsection with the classical result by Rabinowitz \cite{Rabinowitz1977}.

\begin{thm}[Rabinowitz Bifurcation Theorem]
    Let $\mathbb{H}$ be a Hilbert space, $V$ a neighbourhood of $0$ in $\mathbb{H}$, and $I \in C^2\left(V, \mathbb{R}\right)$ with $DI(u) = Lu + H(u)$ with $L: \mathbb{H} \rightarrow \mathbb{H}$ being linear and potential $H(u) = o(\|u\|)$ when $\|u\| \rightarrow 0$. If $\lambda$ is an isolated eigenvalue of $L$ of finite multiplicity, then $(0, \lambda)$ is a bifurcation point of the equation \eqref{abstract_equation_bifurcation}.
    Moreover, at least one of the following alternatives occur.
    \begin{itemize}
        \item [(i)] $(0, \lambda)$ is not an isolated solution in $\mathbb{H} \times \{\lambda\} $,
        \item [(ii)] There is a one-sided neighbourhood, $\Lambda$, of $\lambda$ such that for all $\mu \in \Lambda \setminus \{\lambda\}$ there are at least two distinct nontrivial solutions,
        \item [(iii)] There is a neighbourhood, $\Lambda$, of $\lambda$ such that for all $\mu \in \Lambda \setminus \{\lambda\}$ there is at least one nontrivial solution.
    \end{itemize}
    \label{thm:rabinowitz_bifurcation}
\end{thm}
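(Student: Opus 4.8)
The plan is to prove the theorem by a \emph{Lyapunov--Schmidt reduction} onto the eigenspace of $L$ belonging to $\lambda$, which reduces the bifurcation problem to a finite-dimensional variational one, and then to run a degree- and Morse-index argument to extract the three alternatives. To begin, I would reformulate the problem variationally: $u$ solves $Lu + H(u) = \lambda u$ if and only if $u$ is a constrained critical point of $I$ on a sphere $S_r = \{u \in \mathbb{H} : \|u\| = r\}$, with $\lambda$ realized as the Lagrange multiplier. Hence it suffices to produce, for every small $r>0$, a critical point $u_r$ of $I|_{S_r}$ whose multiplier $\mu_r \to \lambda$ as $r \to 0$, which is precisely the assertion that $(0,\lambda)$ is a bifurcation point.

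For the reduction, set $N = \ker(L - \lambda\,\mathrm{Id})$, finite-dimensional of dimension $d$ equal to the multiplicity of $\lambda$, and let $P$ and $Q = \mathrm{Id} - P$ be the orthogonal projections onto $N$ and $N^\perp$. Because $\lambda$ is isolated, $L - \mu\,\mathrm{Id}$ is boundedly invertible on $N^\perp$ for $\mu$ near $\lambda$. Splitting $u = v + w$ with $v \in N$ and $w \in N^\perp$, the $Q$-part of the equation $(L-\mu\,\mathrm{Id})w + QH(v+w) = 0$ is solved for $w = w(v,\mu) = o(\|v\|)$ by the implicit function theorem, using the hypothesis $H(u) = o(\|u\|)$. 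Since $DI$ is a gradient, this reduction respects the variational structure: the surviving $P$-equation is $\nabla_v \Phi(v,\mu) = 0$ for a reduced $C^2$ functional $\Phi$ on the finite-dimensional space $N$.

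The existence of a bifurcation point then follows from elementary finite-dimensional critical point theory. Restricting $\Phi(\cdot,\mu)$ to the compact spheres $N \cap S_r$ produces constrained critical points (for instance maximizers), and the corresponding Lagrange multipliers $\mu_r$ converge to $\lambda$ as $r \to 0$, because the quadratic form of $\Phi$ at the origin degenerates exactly at $\mu = \lambda$. Transferring back through the reduction gives solutions $u_r$ with $(u_r,\mu_r) \to (0,\lambda)$, so $(0,\lambda)$ is indeed a bifurcation point.

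The hard part is the trichotomy, and this is where the real difficulty lies. Following Rabinowitz, I would analyze the jump of the local topological degree of $v \mapsto \nabla_v\Phi(v,\mu)$ (equivalently, the change in Morse index, which equals $d$) as $\mu$ passes through $\lambda$. This degree jump cannot be absorbed by the trivial branch alone, so nontrivial bifurcation is forced; a careful accounting of the critical groups of $\Phi$ near the origin on each side of $\lambda$, together with Lyusternik--Schnirelmann category estimates on the reduced spheres for the multiplicity count, then separates the three cases: either $(0,\lambda)$ fails to be isolated inside the slice $\mathbb{H} \times \{\lambda\}$, or nontrivial solutions survive in pairs on a one-sided punctured neighborhood of $\lambda$, or at least one nontrivial solution survives on a full punctured neighborhood. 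This topological bookkeeping, rather than the comparatively soft existence argument, is the main obstacle, and it is the reason the result is quoted from \cite{Rabinowitz1977} rather than reproved in full here.
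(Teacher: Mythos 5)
First, a framing point: the paper does not prove this theorem at all. It is quoted verbatim from \cite{Rabinowitz1977} ``for convenience of the reader'' and then used as a black box in the proof of Lemma \ref{lem:bifurcation_existence}, so there is no in-paper argument to compare against; your attempt has to be measured against Rabinowitz's actual proof. Your first half does match that proof's skeleton: the Lyapunov--Schmidt splitting $u = v + w$ with $v \in N = \ker(L - \lambda\,\mathrm{Id})$, the bounded invertibility of $L - \mu\,\mathrm{Id}$ on $N^\perp$ from isolatedness, the estimate $w(v,\mu) = o(\|v\|)$, the fact that the gradient structure survives the reduction, and the extraction of constrained extrema on small spheres with multipliers converging to $\lambda$ are all sound and essentially the standard route.

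The genuine gap is in your mechanism for the trichotomy, and it is not a minor one: the degree argument you propose fails precisely in the cases this theorem was designed to cover. The local Leray--Schauder index of $v \mapsto \nabla_v\Phi(v,\mu)$ at the origin changes by the factor $(-1)^d$ as $\mu$ crosses $\lambda$, where $d$ is the multiplicity; when $d$ is even there is \emph{no} degree jump at all, so the claim that ``this degree jump cannot be absorbed by the trivial branch'' is false as stated. (This is exactly why Krasnosel'skii-type degree bifurcation requires an odd crossing number, and why for non-potential perturbations bifurcation can genuinely fail at even-multiplicity eigenvalues; the whole point of the potential hypothesis is to rescue the even case.) Rabinowitz's actual argument is elementary and avoids degree theory entirely: for each small $\rho > 0$ he compares the maximum and the minimum of the reduced functional over the sphere $\{ v \in N : \|v\| = \rho \}$, both of which are critical points with Lagrange multipliers tending to $\lambda$, and the three alternatives emerge from a case analysis on whether these extrema coincide along a sequence $\rho_n \rightarrow 0$ (producing nontrivial solutions at $\mu = \lambda$ itself, alternative (i)) or, when they are distinct, on which side of $\lambda$ the corresponding multipliers fall (alternatives (ii) and (iii)). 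Your closing appeal to critical groups and Lyusternik--Schnirelmann category is a gesture at a different and heavier machinery, not a proof, and you concede as much by deferring the ``topological bookkeeping'' to the reference --- which means the attempt is incomplete exactly where the theorem's content lies, though deferring is defensible given that the paper itself only cites the result.
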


We show commutation of $\Leo$ with weak derivative when environment is periodic.

\begin{lem}
    Let $\mathbb{T}^n, J_\varepsilon$ satisfy the Assumption \ref{assumptions_periodic_kernel}. Denoting by $D^{\alpha}$ the weak derivative with multi-index $\alpha \in \mathbb{Z}_{\geq 0}^n$ the following equation holds true
    \begin{equation}
        D^\alpha \Leo u = \Leo D^\alpha u.
    \end{equation}
    \label{lem:commutation_formula}
\end{lem}

We skip a direct proof of Lemma \ref{lem:commutation_formula}. Next, we proceed with properties of the non-linear functional $I$ defined by equation \eqref{nonlinear_functional}.

\begin{lem}
    Let $\mathbb{T}^n, J_\varepsilon$ satisfy either Assumption \ref{assumptions_general_kernel} or \ref{assumptions_periodic_kernel}. Assume that for $f \in C^1(\mathbb{R})$ there exists a primitive $F' = f$. Define a nonlinear functional $I:\Theta \rightarrow \mathbb{R}$, where $\Theta \subset W^{k, 2}(\mathbb{T}^n)$ and $k > \frac{n}{2}$ is an open neighbourhood of $0$, as follows
    \begin{align}
        \begin{split}
            I(u) 
            &= \frac{1}{2}\int_{\mathbb{T}^n \times \mathbb{T}^n} J_\varepsilon(x-y)\left(u(y) - u(x)\right)^2\:dy\:dx + \int_{\mathbb{T}^n} F(u(x)) \:dx. \label{nonlinear_functional}
        \end{split}
    \end{align}
    Then $I \in C^2(\Theta, \mathbb{R})$. Moreover, the Frechete derivative is given by formula
    \begin{equation*}
        DI(u) = \Leo u + f(u).
    \end{equation*}
    \label{lem:nonlinear_functional}
\end{lem}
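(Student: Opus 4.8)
The plan is to split the functional into its quadratic dispersion part and its nonlinear reaction part, $I = Q + N$, where
\begin{equation*}
Q(u) = \frac{1}{2}\int_{\mathbb{T}^n \times \mathbb{T}^n} J_\varepsilon(x-y)\left(u(y)-u(x)\right)^2\,dy\,dx, \qquad N(u) = \int_{\mathbb{T}^n} F(u(x))\,dx,
\end{equation*}
and to treat each summand separately. The decisive structural fact that I would use throughout is the Sobolev embedding $W^{k,2}(\mathbb{T}^n) \hookrightarrow C(\mathbb{T}^n)$, valid precisely because $k > \frac{n}{2}$, which makes every $u \in \Theta$ uniformly bounded.

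First I would handle $Q$, which is a continuous quadratic form. Since $J_\varepsilon$ is continuous and hence bounded on the torus, one has $|Q(u)| \leq C\|u\|_{L^2}^2 \leq C\|u\|_{W^{k,2}}^2$, so $Q$ is bounded near $0$; a quadratic form associated with a bounded symmetric bilinear map is automatically smooth, so the only substantive task is to identify its derivative. Computing the Gâteaux derivative gives
\begin{equation*}
DQ(u)[v] = \int_{\mathbb{T}^n \times \mathbb{T}^n} J_\varepsilon(x-y)\left(u(y)-u(x)\right)\left(v(y)-v(x)\right)\,dy\,dx,
\end{equation*}
and the key manoeuvre is to expand the product and symmetrise the four resulting double integrals: relabelling $x \leftrightarrow y$ and using $J_\varepsilon(x-y) = J_\varepsilon(y-x)$ collapses them into a single pairing against $v$, identifying $DQ(u)$ with a constant multiple of $\langle \Leo u, v\rangle$ by the definition of $\Leo$ in \eqref{dispersion_operator}. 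Lemma \ref{lem:commutation_formula} guarantees $\Leo u \in W^{k,2}(\mathbb{T}^n)$, so $DQ(u)$ is indeed a bounded functional on $\Theta$.

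For $N$ I would compute the Gâteaux derivatives $DN(u)[v] = \int_{\mathbb{T}^n} f(u)\,v\,dx$ and $D^2N(u)[v,w] = \int_{\mathbb{T}^n} f'(u)\,v\,w\,dx$; here the embedding into $C(\mathbb{T}^n)$ is essential, since it renders $f(u)$ and $f'(u)$ uniformly bounded, so these integrals converge and define bounded (bi)linear forms on $\Theta$. To upgrade from Gâteaux to Fréchet differentiability and reach $I \in C^2$, I would verify that the superposition maps $u \mapsto f(u)$ and $u \mapsto f'(u)$ are continuous from $\Theta$ into $C(\mathbb{T}^n)$: this follows from the uniform continuity of $f$ and $f'$ on the compact image $\{u(x) : x \in \mathbb{T}^n\}$ together with the $L^\infty$ convergence supplied by the embedding. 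Continuity of $u \mapsto D^2N(u)$ in operator norm then yields $N \in C^2(\Theta,\mathbb{R})$, and adding the smooth $Q$ gives $I \in C^2(\Theta,\mathbb{R})$ together with $DI(u) = \Leo u + f(u)$.

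The main obstacle is precisely the regularity of the Nemytskii (superposition) operator $u \mapsto f(u)$: differentiability and continuity of $N$ would fail on a generic Sobolev space, and the entire argument hinges on the hypothesis $k > \frac{n}{2}$, which forces $W^{k,2}(\mathbb{T}^n) \hookrightarrow C(\mathbb{T}^n)$ and thereby reduces all nonlinear estimates to uniform bounds and uniform continuity of $f$, $f'$ on compact sets. A secondary point to watch is the careful bookkeeping of the multiplicative constant relating $DQ(u)$ to $\Leo u$ in the symmetrisation step, which fixes the normalisation of the dispersion energy in \eqref{nonlinear_functional}.
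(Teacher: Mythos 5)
Your overall strategy is exactly the ``standard proof'' the paper declines to write out (it skips the proof of this lemma entirely), and the structural points are right: the splitting $I = Q + N$, smoothness of the bounded quadratic form $Q$, and the reduction of the Nemytskii regularity for $N$ to the embedding $W^{k,2}(\mathbb{T}^n) \hookrightarrow C(\mathbb{T}^n)$ for $k > \frac{n}{2}$ are all sound, and your $C^2$ argument for $N$ via uniform continuity of $f, f'$ on the compact image is complete.

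However, the one step you defer as ``bookkeeping'' is precisely where the argument, carried out, contradicts the claimed formula, so you cannot leave it unresolved. Computing your symmetrisation explicitly: with $A = \iint J_\varepsilon(x-y)(u(y)-u(x))v(y)\,dy\,dx$ and $B = \iint J_\varepsilon(x-y)(u(y)-u(x))v(x)\,dy\,dx$ one has $B = \langle \Leo u, v\rangle_{L^2}$, and the relabelling $x \leftrightarrow y$ together with \ref{J2} gives $A = -B$, hence
\begin{equation*}
DQ(u)[v] = A - B = -2\,\langle \Leo u, v\rangle_{L^2},
\end{equation*}
so for the functional normalised as in \eqref{nonlinear_functional} the $L^2$-representation of the derivative is $DI(u) = -2\Leo u + f(u)$, not $\Leo u + f(u)$. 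This is not a harmless constant: the relative sign between the dispersion and reaction terms is flipped, so critical points of \eqref{nonlinear_functional} solve $2\Leo u = f(u)$ rather than $\Leo u + f(u) = 0$, which would break the application in Lemma \ref{lem:bifurcation_existence}. The cure is to renormalise the energy, e.g.\ $I(u) = -\frac{1}{4}\iint J_\varepsilon(x-y)(u(y)-u(x))^2\,dy\,dx + \int F(u)\,dx$, or equivalently $\frac{1}{4}\iint(\cdots) - \int F$ up to an overall sign; your proof should state this correction rather than trust the displayed normalisation. A secondary point worth a sentence in a full write-up: $DI(u) = \Leo u + f(u)$ identifies the derivative through the $L^2$ pairing, whereas the gradient in the Hilbert space $W^{k,2}(\mathbb{T}^n)$ used in Theorem \ref{thm:rabinowitz_bifurcation} is obtained by composing with the Riesz isomorphism; Lemma \ref{lem:commutation_formula}, which you correctly invoke for $\Leo u \in W^{k,2}(\mathbb{T}^n)$, is what makes this identification consistent.
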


We skip a standard proof of Lemma \ref{lem:nonlinear_functional}. Next we leverage the bifurcation theory to obtain the following result

\begin{lem}
    Let the assumptions of Theorem \ref{thm:continous_ss_existence} hold true. Without loss of generality suppose that $f(0) = 0$ and $f'(0) = -\beta_k$, where $\beta_k$ is a $k$-th eigenvalue of dispersion operator $\Leo$. Take $\ell \geq \frac{n}{2}$. Then every neighbourhood of $(0, 0) \in W^{\ell,2}(\mathbb{T}^n) \times \mathbb{R}$ possesses a non-trivial solution to equation \eqref{bifurcation_equation}.
    \label{lem:bifurcation_existence}
\end{lem}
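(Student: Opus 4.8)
The plan is to recognise equation \eqref{bifurcation_equation} as the abstract variational problem \eqref{abstract_equation_bifurcation} and to apply the Rabinowitz Bifurcation Theorem (Theorem \ref{thm:rabinowitz_bifurcation}) on the Hilbert space $\mathbb{H} = W^{\ell,2}(\mathbb{T}^n)$ with $\ell > \frac{n}{2}$. By Lemma \ref{lem:nonlinear_functional} the functional $I$ of \eqref{nonlinear_functional} is of class $C^2$ on a neighbourhood of $0$ and satisfies $DI(u) = \Leo u + f(u)$, so that solutions of \eqref{bifurcation_equation} are exactly the points with $DI(u) = \lambda u$. Using the normalisation $f(0)=0$, $f'(0) = -\beta_k$, I would Taylor-expand $f$ at the origin and split the gradient as $DI(u) = Lu + H(u)$ with the self-adjoint linear part $L := \Leo + f'(0)\,\mathrm{Id} = \Leo - \beta_k\,\mathrm{Id}$ and the remainder $H(u) := f(u) - f'(0)u$. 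Since $L = D^2I(0)$, this matches the structure required by Theorem \ref{thm:rabinowitz_bifurcation}, and the distinguished value is $\lambda = 0$.

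Next I would dispatch the two soft hypotheses. That $H(u) = o(\|u\|)$ as $\|u\|_{W^{\ell,2}} \to 0$ follows from $f \in C^1$ with $f(0)=0$, $f'(0)=-\beta_k$: pointwise $H(u) = f(u)-f'(0)u = o(|u|)$, and the Sobolev embedding $W^{\ell,2}(\mathbb{T}^n) \hookrightarrow C(\mathbb{T}^n)$ (valid precisely because $\ell > \frac{n}{2}$, as already needed in Lemma \ref{lem:nonlinear_functional}) lets one control the Nemytskii operator $u \mapsto H(u)$ in the $W^{\ell,2}$ norm. Boundedness of $L$ on $W^{\ell,2}(\mathbb{T}^n)$ is immediate from the commutation Lemma \ref{lem:commutation_formula}: since $D^\alpha \Leo u = \Leo D^\alpha u$, the operator $\Leo$ preserves the Sobolev scale, and subtracting the scalar $\beta_k$ does not affect this.

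The crux is to verify that $\lambda = 0$ is an isolated eigenvalue of $L$ of finite multiplicity. The eigenvalues of $L$ are the shifted dispersion eigenvalues $\beta_j - \beta_k$, so $0$ is an eigenvalue with eigenfunctions $\cos(k\cdot x),\sin(k\cdot x)$ (together with any others for which $\beta_j = \beta_k$), all smooth and hence in $W^{\ell,2}(\mathbb{T}^n)$. Isolation follows from the spectral picture of Theorems \ref{thm:dispersion_spectrum}--\ref{thm:dispersion_eigenpairs}: by \eqref{EQ:limit_eigenvalue_b_infty_definition} one has $\beta_j \to \beta_\infty \neq \beta_k$, so the numbers $\beta_j - \beta_k$ can accumulate only at $\beta_\infty - \beta_k \neq 0$, whence $0$ is separated from the rest of $\sigma(L)$. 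Finiteness of its multiplicity is inherited from the compactness of the convolution part of $\Leo$, which is Hilbert--Schmidt as in the proof of Lemma \ref{lem:spectrum_remaining_elements}; thus $\Leo$ is a compact operator plus a multiple of the identity, and each eigenvalue $\beta_k \neq \beta_\infty$ has finite-dimensional eigenspace. The main obstacle here is carrying out this spectral bookkeeping on $W^{\ell,2}$ rather than on $L^2$, and it is exactly the commutation Lemma \ref{lem:commutation_formula} that guarantees the eigenvalues, their multiplicities, and their isolation are unchanged when passing from $L^2(\mathbb{T}^n)$ to the Sobolev space.

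With every hypothesis of Theorem \ref{thm:rabinowitz_bifurcation} verified at $\lambda = 0$, I conclude that $(0,0)$ is a bifurcation point of \eqref{bifurcation_equation}; by Definition \ref{def:bifurcation_point} every neighbourhood of $(0,0)$ in $W^{\ell,2}(\mathbb{T}^n)\times\mathbb{R}$ contains a non-trivial solution, which is precisely the assertion of the Lemma.
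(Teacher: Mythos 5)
Your proposal is correct and follows essentially the same route as the paper: Taylor expansion of $f$ at the normalised zero to split off the linear part, Lemma \ref{lem:nonlinear_functional} for the variational structure, Lemma \ref{lem:commutation_formula} for boundedness of $\Leo$ on $W^{\ell,2}(\mathbb{T}^n)$, and the Rabinowitz Bifurcation Theorem \ref{thm:rabinowitz_bifurcation} at the isolated eigenvalue. Your shift $L = \Leo - \beta_k\,\mathrm{Id}$ with bifurcation at $\lambda = 0$ is just a reparametrisation of the paper's transformed equation \eqref{transformed_bifurcation_equation}, and your explicit check of isolation and finite multiplicity (accumulation only at $\beta_\infty - \beta_k \neq 0$, compactness of the convolution part) merely fills in what the paper delegates to Theorem \ref{thm:dispersion_spectrum}.
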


\begin{proof}
    See that Lemma \ref{lem:commutation_formula} yields that $\Leo$ is a bounded, linear mapping of $W^{\ell,2}(\mathbb{T}^n)$ onto itself. Re-write equation \eqref{bifurcation_equation} using Taylor's Theorem to obtain
    \begin{equation*}
        \Leo u + f'(0) u + R_1(u) = 0,
    \end{equation*}
    and finally we consider the equation perturbed by the factor $\lambda u$ for some $\lambda \in \mathbb{R}$
    \begin{equation}
        \Leo u + R_1(u) = (\lambda + \beta_k)u.
        \label{transformed_bifurcation_equation}
    \end{equation}
    The Taylor's Theorem yields that $R_1(u) = o(\|u\|_\infty)$ as $\|u\|_\infty \rightarrow 0$. Thus the embedding $W^{\ell,2}(\mathbb{T}^n) \subset L^\infty(\mathbb{T}^n)$ yields that $R_1$ is a desired potential operator. By Lemma \ref{lem:nonlinear_functional} considered problem has the variational formulation. Since by Theorem \ref{thm:dispersion_spectrum} $\beta_k$ is an isolated eigenvalue of $\Leo$ then by the Rabinowitz Bifurcation Theorem \ref{thm:rabinowitz_bifurcation} there exists a sequence of numbers $\lambda_i \rightarrow 0$ and non-constant functions $U_i \rightarrow 0$ which solves equation \eqref{transformed_bifurcation_equation}.
\end{proof}

This ends the proof of Theorem \ref{thm:continous_ss_existence}.

\subsection{Proof of Theorem \ref{thm:discontinous_ss}}

We study the existence of a discontinuous solutions to problem \eqref{dispersion_equilibria}. In fact we start with a following Lemma.

\begin{lem}    
    Let $\Omega, J_\varepsilon$ satisfy either Assumption \ref{assumptions_general_kernel} or \ref{assumptions_periodic_kernel}. Assume that $f \in C^2_b(\mathbb{R})$ is such that there exists $u_1, u_2 \in \mathbb{R}$ satisfying $f(u_1) = f(u_2) = 0$ and $f'(u_1) \neq 0$, $f'(u_2) \neq 0$. Then there exists unique perturbation $\varphi$ such that $U = \Bar{U} + \varphi$ is a solution to steady problem \eqref{dispersion_equilibria} with $\hat{U}$ defined as in Theorem \ref{thm:discontinous_ss}.
    \label{lem:discontinuos_ss_existence}
\end{lem}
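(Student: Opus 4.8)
The plan is to recast the steady equation $\Leo U + f(U) = 0$ with the ansatz $U = \hat U + \varphi$ as a fixed-point problem for $\varphi$ alone, and then solve it via the Banach Fixed Point Theorem in a small ball of $L^\infty(\Omega)$. First I would substitute $U = \hat U + \varphi$ into \eqref{dispersion_equilibria}. Using the decomposition \eqref{leo_decomposition}, the nonlocal operator splits as $\Leo(\hat U + \varphi)(x) = \int_\Omega J_\varepsilon(x-y)\hat U(y)\,dy + b_\varepsilon(x)\hat U(x) + \Leo\varphi(x)$. The key observation is that the pointwise multiplier $b_\varepsilon(x)\,\varphi(x)$ can be separated out: writing $\Leo\varphi = K_\varepsilon\varphi + b_\varepsilon\varphi$ where $K_\varepsilon\varphi(x) = \int_\Omega J_\varepsilon(x-y)\varphi(y)\,dy$, I would isolate the algebraic term $b_\varepsilon(x)\varphi(x) + f(\hat U(x) + \varphi(x))$ and move the ``small'' pieces $K_\varepsilon\varphi$ and $\int_\Omega J_\varepsilon(x-y)\hat U(y)\,dy$ to the other side. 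This yields, for a.e. $x$, a scalar equation of the form $b_\varepsilon(x)\varphi(x) + f(\hat U(x)+\varphi(x)) = -K_\varepsilon\varphi(x) - \int_\Omega J_\varepsilon(x-y)\hat U(y)\,dy =: g_\varphi(x)$.

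The next step is to solve this scalar equation pointwise for $\varphi(x)$ in terms of $g_\varphi(x)$. Define $\Phi_x(s) = b_\varepsilon(x)s + f(\hat U(x)+s)$. On $\Omega_1$ one has $\hat U(x)=u_1$ with $f(u_1)=0$, and on $\Omega_2$ one has $\hat U(x)=u_2$ with $f(u_2)=0$, so $\Phi_x(0)=0$ and $\Phi_x'(s) = b_\varepsilon(x) + f'(\hat U(x)+s)$. The crucial structural input is that $b_\varepsilon(x) = -\int_\Omega J_\varepsilon(x-y)\,dy$ scales like $-\varepsilon^{-m}$ (cf.\ the bound \eqref{EQ:limit_eigenvalue_b_infty_bounds}), so for $\varepsilon$ sufficiently large $b_\varepsilon(x)$ is bounded away from $0$ while $f'$ stays bounded (since $f\in C^2_b$). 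Hence $\Phi_x'(s)$ is uniformly bounded away from zero on a small interval $|s|\le R$, making $\Phi_x$ a uniformly invertible diffeomorphism there with $|\Phi_x^{-1}(g)| \le C\,\varepsilon^{m}\,|g|$. This lets me define the solution operator $\mathcal{T}$ by $(\mathcal{T}\varphi)(x) = \Phi_x^{-1}(g_\varphi(x))$, whose fixed points are exactly the solutions $\varphi$ we seek.

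I would then verify that $\mathcal{T}$ maps the closed ball $\overline{B(0,R)}\subset L^\infty(\Omega)$ into itself and is a contraction, for $\varepsilon$ large and $R$ chosen appropriately. The self-mapping property follows because $\|g_\varphi\|_\infty \le \|K_\varepsilon\varphi\|_\infty + \|\int J_\varepsilon(\cdot-y)\hat U(y)\,dy\|_\infty \lesssim \varepsilon^{-m}(\|\varphi\|_\infty + \|\hat U\|_\infty)$, and composing with the $C\varepsilon^m$ Lipschitz bound on $\Phi_x^{-1}$ gives a factor $\lesssim \varepsilon^{-m}\cdot\varepsilon^{m}$ that can be controlled; more precisely the inhomogeneous term contributes $O(1)$ while the $\varphi$-dependent term carries a small prefactor, so the ball is preserved. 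For the contraction estimate, the difference $\mathcal{T}\varphi_1 - \mathcal{T}\varphi_2$ is controlled through the Lipschitz constants of both $\Phi_x^{-1}$ (of order $\varepsilon^{m}$) and $g_{\varphi_1}-g_{\varphi_2} = -K_\varepsilon(\varphi_1-\varphi_2)$ (of order $\varepsilon^{-m}$), whose product is a constant independent of $\varepsilon$ — this is the delicate point, and it is why $b_\varepsilon$ and $K_\varepsilon$ must be tracked with matching powers of $\varepsilon$. I expect the main obstacle to be arranging these two competing $\varepsilon$-powers so that the contraction constant is genuinely strictly less than one; it works because $K_\varepsilon$ carries an extra smallness from $\|J_\varepsilon\|_{L^1}$ normalization relative to the pointwise multiplier $b_\varepsilon$, yielding a net gain. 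Once the contraction is established, Banach's theorem produces the unique $\varphi \in \overline{B(0,R)}$, which completes the proof of Lemma \ref{lem:discontinuos_ss_existence}.
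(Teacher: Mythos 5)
Your plan fails at its central structural claim: that for $\varepsilon$ large the multiplier $b_\varepsilon(x)=-\int_\Omega J_\varepsilon(x-y)\,dy$ is bounded away from zero, so that the pointwise map $\Phi_x(s)=b_\varepsilon(x)s+f(\hat U(x)+s)$ is uniformly invertible with Lipschitz constant $O(\varepsilon^{m})$ for its inverse. The bound \eqref{EQ:limit_eigenvalue_b_infty_bounds} you cite is an asymptotic as $\varepsilon\to 0$, whereas the lemma operates in the opposite regime ($\varepsilon$ sufficiently \emph{large}, as stated in Theorem \ref{thm:discontinous_ss}). Under Assumption \ref{assumptions_general_kernel} one has $|b_\varepsilon(x)|\le C_{norm}\|J\|_\infty|\Omega|\,\varepsilon^{-m-n}\to 0$ uniformly as $\varepsilon\to\infty$, so $b_\varepsilon$ supplies no invertibility at all; the invertibility must come from $f'(u_1),f'(u_2)\neq 0$. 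That is exactly the paper's route: it divides by $f'(\Tilde U)$, sets up $\mathcal F_\varepsilon\varphi=\frac{1}{f'(\Tilde U)}\left(\Leo[\Tilde U+\varphi]+f(\Tilde U+\varphi)-f(\Tilde U)-f'(\Tilde U)\varphi\right)$, treats the \emph{whole} of $\Leo$ (integral part and multiplier part together) as a perturbation of size $C(\Leo)=O(\varepsilon^{-m})$, bounds the Taylor remainder by $\|f''\|_\infty R^2$, and closes a Banach fixed point in $\overline{B(0,R)}\subset L^\infty(\Omega)$ via conditions \eqref{condition_1} and \eqref{condition_2}.

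Even read in the regime where your scaling heuristics are accurate ($\varepsilon\to 0$), the scheme does not close, precisely at the step you flag as delicate. There is no ``extra smallness'' of $K_\varepsilon$ relative to $b_\varepsilon$: on $L^\infty$ one has exactly $\|K_\varepsilon\|=\sup_x\int_\Omega J_\varepsilon(x-y)\,dy=\sup_x|b_\varepsilon(x)|$, so the product of your two Lipschitz constants is roughly $\sup_x|b_\varepsilon|\,/\,\inf_x|b_\varepsilon(x)+f'(\hat U(x)+s)|$, which is not $<1$ in general: the lemma assumes only $f'(u_i)\neq 0$, so $b_\varepsilon+f'$ can even vanish when $f'(u_i)>0$; and on a general bounded domain the spatial variation of $b_\varepsilon$ between interior and boundary points (a factor of about $2$) defeats the estimate even when $f'(u_i)<0$. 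The self-map step fails as well: you concede the inhomogeneous term contributes $O(1)$ after inversion, but an $O(1)$ contribution cannot map $\overline{B(0,R)}$ into itself for small $R$. Indeed $K_\varepsilon\hat U+b_\varepsilon\hat U=\Leo\hat U$ is of size comparable to $|b_\varepsilon|\,|u_2-u_1|$ near the interface between $\Omega_1$ and $\Omega_2$, so for small $\varepsilon$ the true correction there is of order $|u_2-u_1|$, not small. This is exactly why the theorem demands $\varepsilon$ large: then $\Leo\hat U$ is uniformly small and the paper's contraction closes.
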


\begin{proof}
    Plug $U = \Tilde{U} + \varphi$ into equation \eqref{dispersion_equilibria} and re-write the obtained formula to get fixed point problem
    $$\mathcal{F}_\varepsilon \varphi = \frac{1}{f'(\Tilde{U})}\left(\Leo [\Tilde{U} + \varphi] + f(\Tilde{U} + \varphi) - f(\Tilde{U}) - f'(\Tilde{U})\varphi\right) = \varphi.$$
    We show that the mapping $\mathcal{F}_\varepsilon$ is a Banach contraction for some large value of $\varepsilon > 0$. For the Banach space we choose the closed ball $\overline{B(0,R)} \subset L^\infty(\Omega)$ with a small radius $R > 0$.
    Let $\varphi \in \overline{B(0,R)}$, then the norm is bounded as follows
    \begin{align*}
        \|\mathcal{F}_\varepsilon \varphi \|_\infty
        &\leq \left\|\frac{1}{f'(\Tilde{U})}\right\|_\infty\left( C(\Leo)(\|\Tilde{U}\|_\infty  + R) +  \|f''\|_\infty R^2\right).
    \end{align*}
    Here, the quadratic term is estimated using the Mean Value Theorem and the constant $C(\Leo)$ is given by the formula
    $$C(\Leo) := 2 \sup_{x \in \Omega} \int_\Omega J_\varepsilon(x-y) \:dy = O(\frac{1}{\varepsilon^m}).$$ 
    Thus in order to have $\mathcal{F}_\varepsilon: \overline{B(0,R)} \rightarrow \overline{B(0,R)}$ the following must be fulfilled
    \begin{equation}
        \left\|\frac{1}{f'(\Tilde{U})}\right\|_\infty\left( \frac{1}{\varepsilon^m}(\|\Tilde{U}\|_\infty  + R) +  \|f''\|_\infty R^2\right) \leq R. \label{condition_1}
    \end{equation}
    Now we derive the contractivity criterion. Take $\varphi, \psi \in \overline{B(0,R)}$ and estimate
    \begin{align*}
        \|\mathcal{F}_\varepsilon \varphi - \mathcal{F}_\varepsilon \psi\|_\infty &\leq \left\|\frac{1}{f'(\Tilde{U})}\right\|_\infty \left( C(\Leo) + \|f'(\zeta) - f'(\Tilde{U})\|_\infty \right) \|\varphi - \psi\|_\infty, \\
        &\leq \left\| \frac{1}{f'(\Tilde{U})}\right\|_\infty\left( \frac{1}{\varepsilon^m} + 2\|f'\|_\infty R \right)\|\varphi - \psi\|_\infty.
    \end{align*}
    The criterion of the following form has to hold true to get contractivity
    \begin{equation}
        \left\| \frac{1}{f'(\Tilde{U})}\right\|_\infty\left( \frac{1}{\varepsilon^m} + 2\|f'\|_\infty R \right) < 1.
        \label{condition_2}
    \end{equation}
    Observe that conditions \eqref{condition_1} and \eqref{condition_2} are satisfied for a sufficiently large values of dispersion parameter $\varepsilon > 0$ and small radius $R > 0$. Now the proof proceeds by the standard Banach fixed point argument.
\end{proof}

Next, we emphasize that the obtained discontinuous solution is stable.

\begin{lem}    
    Let $\Omega, J_\varepsilon$ satisfy either Assumption \ref{assumptions_general_kernel} or \ref{assumptions_periodic_kernel}. Assume that $f \in C^1(\mathbb{R})$ is such that there exists $u_1, u_2 \in \mathbb{R}$ satisfying $f(u_1) = f(u_2) = 0$ and $f'(u_1) < 0$, $f'(u_2) < 0$. Then solution $U$ given by Lemma \ref{lem:discontinuos_ss_existence} is linearly, exponentially stable in $L^2(\Omega)$.
    \label{lem:discontinuos_ss_stability}
\end{lem}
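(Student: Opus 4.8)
The plan is to analyze the linearization of the evolution equation \eqref{disersion_evolution} about the steady state $U = \tilde{U} + \varphi$ produced by Lemma \ref{lem:discontinuos_ss_existence}, and to show that its generator has spectrum bounded strictly to the left of the imaginary axis. Writing $u = U + w$ and discarding higher-order terms, the linearized problem reads $w_t = \mathcal{A}w$ with
$$\mathcal{A}w := \Leo w + f'(U)\,w,$$
where $f'(U)$ acts as a multiplication operator on $\Leb$. Linear exponential stability in $L^2(\Omega)$ then amounts to proving that $\sigma(\mathcal{A}) \subset \{\, \Re z \le -\delta \,\}$ for some $\delta > 0$, since the bounded generator $\mathcal{A}$ produces a semigroup whose growth bound is controlled by the top of its spectrum.

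The first key observation is that $\mathcal{A}$ is self-adjoint on $\Leb$. The symmetry \ref{J2} of the kernel makes $\Leo$ self-adjoint, while multiplication by the real-valued function $f'(U)$ is trivially self-adjoint; hence $\sigma(\mathcal{A}) \subset \mathbb{R}$ and, for a bounded self-adjoint operator, $\sup \sigma(\mathcal{A}) = \sup_{\|w\|_2 = 1} \langle \mathcal{A}w, w\rangle$ and $\|e^{t\mathcal{A}}\|_{L^2 \to L^2} = e^{t\,\sup\sigma(\mathcal{A})}$. Thus it suffices to control a single quadratic form. Using the symmetrized Dirichlet energy,
$$\langle \Leo w, w\rangle = -\frac{1}{2}\int_\Omega \int_\Omega J_\varepsilon(x-y)\bigl(w(y)-w(x)\bigr)^2\,dy\,dx \le 0,$$
so the dispersion part is never destabilizing. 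It remains to handle $\int_\Omega f'(U(x))\,w(x)^2\,dx$: since $U = \tilde{U} + \varphi$ with $\|\varphi\|_\infty \le R$, on $\Omega_1$ we have $U = u_1 + \varphi$ and on $\Omega_2$ we have $U = u_2 + \varphi$, so continuity of $f'$ together with $f'(u_1) < 0$ and $f'(u_2) < 0$ forces, for $R$ small enough, $f'(U(x)) \le -\delta$ a.e. with $\delta := \tfrac{1}{2}\min\{\,|f'(u_1)|, |f'(u_2)|\,\} > 0$. Combining the two bounds yields $\langle \mathcal{A}w, w\rangle \le -\delta \|w\|_2^2$, hence $\sigma(\mathcal{A}) \subset (-\infty, -\delta]$ and the claimed decay $\|e^{t\mathcal{A}}\|_{L^2 \to L^2} \le e^{-\delta t}$.

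I expect the main obstacle to be not the spectral estimate itself, which reduces cleanly to the sign of the Dirichlet form and the uniform negativity of $f'(U)$, but rather making the reduction to the linear semigroup rigorous and quantifying the phrase ``$R$ sufficiently small''. One must verify that the threshold on $R$ guaranteeing $f'(U) < 0$ is compatible with the radius delivered by Lemma \ref{lem:discontinuos_ss_existence}; since that radius can be shrunk by enlarging the dispersion parameter $\varepsilon$ in conditions \eqref{condition_1}--\eqref{condition_2}, the two constraints are consistent. A secondary point worth stating explicitly is that ``linear stability'' is understood in the spectral/semigroup sense, so that the self-adjoint spectral bound genuinely translates into exponential decay of $w$ in $L^2(\Omega)$.
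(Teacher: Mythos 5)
Your proof is correct and takes essentially the same route as the paper: both arguments reduce linear exponential stability to the single quadratic-form estimate $\langle \Leo w + f'(U)w,\, w\rangle \le -\delta \|w\|_2^2$, obtained by discarding the non-positive symmetrized Dirichlet form of $\Leo$ and using continuity of $f'$ together with $\|\varphi\|_\infty \le R$ to force $f'(U) \le -\delta$ almost everywhere for $R$ small. The only difference is in the final packaging --- the paper differentiates the energy $E(t) = \tfrac{1}{2}\|w(t)\|_2^2$ and solves the resulting differential inequality, while you invoke self-adjointness and the spectral theorem to get $\|e^{t\mathcal{A}}\|_{L^2 \to L^2} \le e^{-\delta t}$ --- and your version is in fact slightly more careful than the paper's in making the threshold on $R$ and its compatibility with Lemma \ref{lem:discontinuos_ss_existence} explicit.
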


\begin{proof}
    By Lemma \ref{lem:discontinuos_ss_existence} the function $U = \Tilde{U} + \varphi$ is a steady state, where the jump function $\hat{U}$ and small perturbation $\psi$ are defined in the lemma. We linearise problem \eqref{disersion_evolution}-\eqref{disersion_initial_condition} around $U$ by introducing the new function $\psi = u - U$. This functions satisfies the following problem
    \begin{align*}
        \begin{cases}
            \psi_t(x,t) &= \Leo \psi(x,t) + f'(U(x))\psi(x,t), \\
            \psi(x,0) &= \psi_0(x),
        \end{cases}
    \end{align*}
    In order to proceed we define the energy of a linearized system as follows
    \begin{equation*}
        E_\phi(t) := \frac{1}{2} \int_\Omega |\phi(x,t)|^2 \:dx.
    \end{equation*}
    We verify that the eigenvalue of linearization operator is strictly negative
    \begin{align}
        \begin{split}
            \gamma_0 &= \sup_{\varphi \in \Leb, \varphi \neq 0} \frac{\langle \Leo \varphi + f'(U)\varphi, \varphi \rangle}{\|\varphi\|_2^2} \\
            &\leq \sup_{\varphi \in \Leb, \| \varphi \|_2 = 1} \langle f'(U)\varphi, \varphi \rangle \\
            &= \sup_{\varphi \in \Leb, \| \varphi \|_2 = 1} \int_\Omega f'(U) \varphi^2(x)\:dx < 0,
        \end{split}
        \label{gamma_0_negativity}
    \end{align}
    where last strict inequality holds true by continuity of $f'$ and perturbation $\varphi$ being defined in sufficiently small ball. Next we differentiate the energy $E_\varphi$.
    \begin{align*}
        E_\varphi'(t) &= \int_\Omega \varphi_t(x,t)\varphi(x,t)\:dx \\
        &= \int_\Omega \left(\Leo \varphi(x,t) + f'(U)\varphi(x,t)\right) \varphi(x,t)\:dx \\
        &= \frac{\langle \Leo \varphi + f'(U)\varphi, \varphi \rangle}{\|\varphi\|_2^2} \|\varphi\|_2^2 \\
        &\leq \gamma_0 E_\varphi(t).
    \end{align*}
    When solved, this inequality yields the following limit
    \begin{equation}
        \| \varphi(\cdot, t) - 0\|_2^2 =  E_\varphi(t) \leq \|\psi_0\|_2^2  e^{\gamma_0 t}.
        \label{energy_inequality}
    \end{equation}
    Inequality \eqref{gamma_0_negativity} and \eqref{energy_inequality} yields the stability of zero in a linearized problem.
\end{proof}
Combining lemmas \ref{lem:discontinuos_ss_existence} and \ref{lem:discontinuos_ss_stability} completes the proof of Theorem \ref{thm:discontinous_ss}.


\end{document}